\title{A characterization theorem for Aumann integrals}
\author{\c{C}a\u{g}{\i}n Ararat\thanks{Princeton University, Department of Operations Research and Financial Engineering, Princeton, NJ 08544, USA, cararat@princeton.edu.}
\and Birgit Rudloff\thanks{Princeton University, Department of Operations Research and Financial Engineering \& Bendheim Center for Finance, Princeton, NJ 08544, USA, tel: +1(609)258-4558, fax: +1(609)258-3791, brudloff@princeton.edu.}}
\date{November 18, 2014}
\makeatletter \renewenvironment{proof}[1][\proofname] {\par\pushQED{\qed}\normalfont\topsep6\p@\@plus6\p@\relax\trivlist\item[\hskip\labelsep\bfseries#1\@addpunct{.}]\ignorespaces}{\popQED\endtrivlist\@endpefalse} \makeatother
\newtheorem{thm}{Theorem}[section]
\newtheorem{lem}[thm]{Lemma}
\newtheorem{prop}[thm]{Proposition}
\newtheorem{defn}[thm]{Definition}
\theoremstyle{definition}
\newtheorem{example}[thm]{Example}
\newtheorem{rem}[thm]{Remark}
\numberwithin{equation}{section}
\newcommand{\R}{\mathrm{I\negthinspace R}}
\newcommand{\sm}{\!\setminus\!}
\DeclareMathOperator{\cl}{cl}
\DeclareMathOperator{\co}{co}
\DeclareMathOperator{\interior}{int}
\let\abs=\envert
\newcommand{\G}{\mathcal{G}}
\newcommand{\F}{\mathcal{F}}
\newcommand{\X}{\mathcal{X}}
\newcommand{\A}{\mathcal{A}}
\renewcommand{\L}{\mathcal{L}}
\newcommand{\of}[1]{\ensuremath{\left( #1 \right)}}
\newcommand{\cb}[1]{\ensuremath{ \left\{ #1 \right\} }}
\newcommand{\ip}[1]{\ensuremath{ \left\langle #1 \right\rangle }}
\begin{document}
\maketitle
\thispagestyle{empty}
\begin{abstract}
A Daniell-Stone type characterization theorem for Aumann integrals of set-valued measurable functions will be proven. It is assumed that the values of these functions are closed convex upper sets, a structure that has been used in some recent developments in set-valued variational analysis and set optimization. It is shown that the Aumann integral of such a function is also a closed convex upper set. The main theorem characterizes the conditions under which a functional that maps from a certain collection of measurable set-valued functions into the set of all closed convex upper sets can be written as the Aumann integral with respect to some $\sigma$-finite measure. These conditions include the analog of the conlinearity and monotone convergence properties of the classical Daniell-Stone theorem for the Lebesgue integral, and three geometric properties that are peculiar to the set-valued case as they are redundant in the one-dimensional setting.
\\[.2cm]
{\bf Keywords and phrases.} Aumann integral, characterization theorem, Daniell-Stone theorem, closed convex upper sets, complete lattice approach
\\[.2cm]
{\bf Mathematical Subject Classification (2010).} 	28B20, 
										26E25, 
										46B42, 
										54C60 
\\[.2cm]
The final publication in Set-Valued and Variational Analysis, DOI: 10.1007/s11228-014-0309-0, is available at Springer via \url{http://dx.doi.org/10.1007/s11228-014-0309-0}.								
\end{abstract}

\section{Introduction}
Integration of set-valued functions (multifunctions, correspondences) on a measurable space dates back to a paper by Aumann \cite{aumann} from 1965. Given a $\sigma$-finite measure, the Aumann integral of a measurable function whose values are subsets of $\R^{m}$, $m\in\{1,2,\ldots\}$, is defined as the set of all (vector) Lebesgue integrals of its integrable selections. The Aumann integral can be seen as a functional that maps a measurable set-valued function to a subset of $\R^{m}$. Conversely, one can consider a functional that maps from a certain class of measurable set-valued functions into a collection of subsets of $\R^{m}$ and look for conditions under which this functional can be written as the Aumann integral with respect to a measure. This type of result is well known for the Lebesgue integral, see \cite{stone,cinlar}, and is also referred to as the Daniell-Stone theorem. To the best of our knowledge, there seems to be no such characterizations of the Aumann integral in the literature so far. In the present paper, a special structure provided by the so-called upper sets is assumed for the values of the measurable set-valued functions and a Daniell-Stone type characterization theorem is proven for the Aumann integral.

Set relations have recently been of interest in set-valued variational analysis and set optimization. In \cite{andreasduality}, a reflexive and transitive relation is defined on the power set of $\R^{m}$ (more generally, a real linear space) with respect to a fixed convex cone $C\subseteq \R^{m}$. The set of equiva\-lence classes induced by the corresponding indifference relation of this set relation is given by $\mathcal{P}(\R^{m},C)=\cb{D\subseteq\R^{m}\mid D=D+C}$, whose elements are the so-called upper subsets of $\R^{m}$. It turns out that the set $\mathcal{P}(\R^{m},C)$ of all upper subsets has rather useful algebraic and order theoretic properties: On the algebraic side, an addition operation and a multiplication with positive scalars $\lambda\geq 0$ can be defined on $\mathcal{P}(\R^{m},C)$, which makes it a conlinear space, that is, a linear space except that it is closed under multiplication with positive scalars only. On the order theoretic side, the infimum and supremum of an arbitrary collection of upper sets are well-defined upper sets when $\mathcal{P}(\R^{m},C)$ is equipped with the usual superset relation $\supseteq$. Thus, $\mathcal{P}(\R^{m},C)$ is an order-complete lattice under $\supseteq$.

The properties of $\mathcal{P}(\R^{m},C)$ mentioned above and their consequences make it possible to generalize many concepts and results of variational analysis to functions whose values are upper sets. These include Legendre-Fenchel conjugation, directional derivatives, subd\-ifferentials, the Fenchel-Moreau biconjugation theorem, Lagrange and Fenchel-Rockafellar duality; see \cite{setoptsurvey} for a survey on set-valued convex analysis based on the so-called complete lattice approach. 
This approach also provides new points of view in vector optimization, see \cite{lohne,setoptsurvey}, and in financial mathematics, see, for instance,  \cite{conical} for the theory of set-valued risk measures based on upper sets. It turns out that many problems in vector or set optimization can be rewritten as problems for $\mathcal{P}(\R^{m},C)$-valued functions. In the present paper, we will thus focus on such functions. The complete lattice approach will play an essential role in obtaining the present results.

In classical Lebesgue integration, one version of the Daniell-Stone theorem states that a functional that maps into $[0, +\infty]$ from the set of all positive measurable functions on a measurable space can be written as the integral with respect to a measure if and only if it is conlinear (that is, linear except that multiplication with strictly negative scalars is not considered) and it preserves decreasing limits (monotone convergence property); see \cite[Theorem~I.4.21]{cinlar} for this version of the result and \cite{stone} for the original work of Stone. The aim of this paper is to prove an analogue of this theorem for the Aumann integrals of measurable functions whose values are closed and convex upper subsets of $\R^{m}$. The Aumann integrals of such functions are again closed convex upper sets as shown in Proposition~\ref{integralup}. This property makes it possible to use the same algebraic and order theoretic rules for the values of the functions and their integrals.

The main result of the present paper is Theorem~\ref{daniellstone}. It shows that a functional that maps measurable set-valued functions into closed convex upper sets can be written as the Aumann integral with respect to a measure if and only if the functional is conlinear, it preserves decreasing limits and it satisfies three additional properties that are peculiar to the set-valued framework. One of these properties ensures that the value of the functional at a given set-valued function can be computed in terms of the corresponding values at the supporting halfspaces of the function. This is already a well-known property of the Aumann integral as proven in \cite{survey}. The other two properties ensure that the functional maps halfspace-valued functions to halfspaces and ``point+cone"-valued functions to ``point+cone"s. 
It turns out that these three properties, which are redundant in the scalar case, suffice to complement the already existing properties of the scalar theory (conlinearity, monotone convergence and a technical condition to guarantee $\sigma$-finiteness) in order to obtain a Daniell-Stone type characterization for the Aumann integral.

\section{Closed convex upper sets}\label{uppersets}
Let $m$ be a strictly positive integer and consider the Euclidean space $\R^{m}$ with its usual topology and the usual inner product $\ip{\cdot, \cdot}$. Let $\interior D$, $\cl D$, $\co D$ denote the interior, closure, and the convex hull of a set $D\subseteq\R^{m}$, respectively. The \textit{Minkowski sum} of two sets $D, E\subseteq\R^{m}$ is defined as $D+E=\cb{d+e\mid d\in D,\;e\in E}$ with the convention $D+\emptyset=\emptyset+D=\emptyset$. Let $C\subseteq\R^{m}$ be a closed convex cone such that $0\in C$ and $C\neq \R^{m}$, which is to be fixed throughout. We also assume that $C$ has nonempty interior, that is, $\interior C\neq\emptyset$.

Define
\begin{equation*}
\G=\G(\R^{m},C)=\cb{D\subseteq\R^{m}\mid D=\cl\co(D+C)}.
\end{equation*}
An element of $\G$ is called an \textit{closed convex upper set}. This paper is concerned with the Aumann integration of functions on a measurable space whose values are closed convex upper sets. An immediate consequence of the Hahn-Banach theorem is that every $D\in\G$ can be written as
\begin{equation}\label{separation}
D=\bigcap_{w\in C^{+}\setminus\{0\}}\cb{z\in\R^{m}\mid \ip{z, w}\geq\inf_{y\in D}\ip{y, w}},
\end{equation}
where $C^{+}$ is the positive dual cone of $C$ given by
\begin{equation*}
C^{+}=\cb{w\in \R^{m}\mid \forall z\in C:\; \ip{z, w}\geq 0}.
\end{equation*}

Following \cite{andreasduality,setoptsurvey}, we summarize the algebraic and order theoretic properties of $\G$ that will be used in the present paper. Note that the Minkowski sum of two sets in $\G$ may fail to be closed. Instead, the addition operation $\oplus$ on $\G$ is defined by
\begin{equation*}
D\oplus E=\cl(D+E),\quad D, E\in\G.
\end{equation*}
For $D\in\G$ and $\lambda\in\R$, define $\lambda D=\{\lambda d\mid d\in D\}$ with the conventions $\lambda\emptyset=\emptyset$ for $\lambda\neq 0$ and $0D= C$. This is the usual Minkowski multiplication with scalars except for the second convention concerning the multiplication with zero. This convention guarantees that $\G$ is closed under multiplication with positive real numbers $\lambda\geq 0$. We will use shorthand notations such as $z-D=\cb{z}+(-1)D$ for $z\in\R^{m}$ and $D\in\G$. Finally, when the usual superset relation $\supseteq$ is used as a partial order, $\G$ is an order-complete lattice, where the infimum and supremum of a collection $\mathcal{D}\subseteq\G$ are provided by the formulae
\begin{equation*}
\inf\mathcal{D}=\cl\co\bigcup_{D\in\mathcal{D}}{D},\quad \sup\mathcal{D}=\bigcap_{D\in\mathcal{D}}{D},
\end{equation*}
respectively, with the conventions $\inf \emptyset=\emptyset$ and $\sup\emptyset=\R^{m}$; see \cite{andreasduality}.

\section{Measurable functions}
Let $(\X,\A)$ be a measurable space. Given a set-valued function $F\colon\X\rightarrow\G\sm\cb{\emptyset}$, the \textit{preimage} of a set $D\subseteq\R^{m}$ under $F$ is defined as
\begin{equation*}
F^{-1}(D)=\cb{x\in \X\mid F(x)\cap D\neq\emptyset}.
\end{equation*}

\begin{defn}\label{measurability}
A set-valued function $F\colon\X\rightarrow\G\sm\cb{\emptyset}$ is said to be \textit{measurable} if $F^{-1}(D)\in\A$ for every closed set $D\subseteq\R^{m}$. The set of all measurable set-valued functions $F\colon\X\rightarrow\G\sm\cb{\emptyset}$ is denoted by $\F$.
\end{defn}

This is the usual notion of measurability for set-valued functions as in the books \cite{rockafellar2,Molchanov,sdibook}. In Theorem~2.3 of \cite[Chapter 1]{Molchanov}, set-valued functions whose values are closed subsets of a Polish space are considered. Assuming the existence of a $\sigma$-finite measure $\nu$ under which $(\X,\A,\nu)$ is complete, it is shown that such a set-valued function is measurable in the sense of Definition~\ref{measurability} if and only if the preimage of every Borel set, or equivalently every open set, is measurable. Since we work with functions with values in $\G$, it is possible to provide another characterization of measurability in terms of simple sets of the form ``point minus cone'' as shown in the following proposition.

\begin{prop}\label{mainmeasurable}
Let $F\in\F$. The following are equivalent:
\begin{enumerate}[(i)]
\item $F$ is measurable.
\item $F^{-1}(D)\in\A$ for every $D\subseteq \R^{m}$ with $D=\cl\co(D-C)$.
\item $F^{-1}(y-C)\in\A$ for every $y\in \R^{m}$.
\end{enumerate}
\end{prop}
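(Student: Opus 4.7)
The plan is to prove the cycle (i) $\Rightarrow$ (ii) $\Rightarrow$ (iii) $\Rightarrow$ (i). The first two implications are almost tautological. For (i) $\Rightarrow$ (ii), any $D \subseteq \R^m$ with $D = \cl\co(D - C)$ is in particular closed, so $F^{-1}(D) \in \A$ follows directly from Definition~\ref{measurability}. For (ii) $\Rightarrow$ (iii), given $y \in \R^m$, the set $D = y - C$ is closed and convex, and since $C$ is a convex cone one has $C + C = C$, so $(y - C) - C = y - C$; hence $D = \cl\co(D - C)$ and (iii) follows.

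The substance is (iii) $\Rightarrow$ (i). The strategy is first to promote (iii) to measurability of the distance function $x \mapsto d(y, F(x))$ for every $y \in \R^m$, and then to derive (i) from it. A basic identity I will use is $F(x) + C = F(x)$, which follows from $F(x) = \cl\co(F(x)+C)$ together with $0 \in C$; it yields that $z \in F(x) \iff F(x) \cap (z - C) \neq \emptyset$. Using this, for fixed $y \in \R^m$ and $r > 0$,
\begin{equation*}
\cb{x \in \X : d(y, F(x)) < r} \;=\; \bigcup_{b \in B(0, r)} F^{-1}\bigl((y - b) - C\bigr),
\end{equation*}
where $B(0, r)$ denotes the open Euclidean ball. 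The right-hand side is an uncountable union of sets each of which is in $\A$ by (iii); the crux is to rewrite it as a countable union indexed by a dense sequence $(b_n)$ in $B(0, r)$.

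To justify this reduction I will exploit the standing assumption $\interior C \neq \emptyset$. Fix $c \in \interior C$ with $B(c, \delta_0) \subseteq C$. If $y - b \in F(x)$ for some $b \in B(0, r)$, set $b^\star = b - \epsilon c$ with $\epsilon > 0$ so small that $b^\star \in B(0, r)$. Then for every $v \in \R^m$ with $\|v\| < \epsilon \delta_0$ the vector $\epsilon c - v$ lies in $C$, so
\begin{equation*}
y - (b^\star + v) \;=\; (y - b) + (\epsilon c - v) \;\in\; F(x) + C \;=\; F(x).
\end{equation*}
Thus the open set $B(b^\star, \epsilon \delta_0) \cap B(0, r)$ consists of $b$'s with $y - b \in F(x)$, and density of $(b_n)$ delivers some $b_n$ inside it. This yields $\cb{x : d(y, F(x)) < r} = \bigcup_n F^{-1}\bigl((y - b_n) - C\bigr) \in \A$.

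Once $d(y, F(\cdot))$ is measurable for every $y$, recovering (i) is routine. For an arbitrary closed $D \subseteq \R^m$, writing $D = \bigcup_{k \in \mathbb{N}} \bigl(D \cap \bar{B}(0, k)\bigr)$ reduces the task to compact $D$. For such $D$ with a countable dense subset $(y_n)$, continuity of $y \mapsto d(y, F(x))$ together with closedness of $F(x)$ and compactness of $D$ gives
\begin{equation*}
F(x) \cap D \neq \emptyset \iff \inf_n d(y_n, F(x)) = 0,
\end{equation*}
so $F^{-1}(D) = \bigcap_{k \in \mathbb{N}} \bigcup_n \cb{x : d(y_n, F(x)) < 1/k} \in \A$. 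The main obstacle is the perturbation step that replaces the uncountable union over $b$ by a countable one: without $\interior C \neq \emptyset$ the set $\cb{b : y - b \in F(x)}$ could fail to contain any interior points, and a countable dense sequence in $B(0, r)$ would be powerless to capture it. The nonempty interior of $C$ is exactly the ingredient that converts the one-sided invariance $F(x) + C = F(x)$ into an open-ball thickening that meets the dense sequence.
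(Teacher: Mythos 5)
Your proof is correct, but it takes a genuinely different route from the paper's. The paper handles (iii) $\Rightarrow$ (i) by observing that every nonempty value of $F$ is a regular closed set (using $\interior C\neq\emptyset$ to get $\interior F(x)\neq\emptyset$ and then \cite[Theorem~2.33]{rockafellar2}), invoking \cite[Example~14.7]{rockafellar2} to reduce measurability to measurability of the point-membership sets $\cb{x\mid y\in F(x)}$, and finally identifying $\cb{x\mid y\in F(x)}$ with $F^{-1}(y-C)$ via $F(x)+C=F(x)$ --- a short argument that delegates the real measure-theoretic work to the literature. You instead reprove that reduction from scratch: you show the strict sublevel sets of $x\mapsto d(y,F(x))$ are countable unions of sets $F^{-1}((y-b_n)-C)$, using the dense-sequence perturbation $b^\star=b-\epsilon c$ with $c\in\interior C$, and then recover preimages of arbitrary closed sets by the standard compact-exhaustion and dense-subset argument. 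The exchange is clear: the paper's proof is shorter and cleaner at the cost of two external citations, while yours is self-contained and makes explicit exactly where $\interior C\neq\emptyset$ is needed (to thicken the one-sided invariance $F(x)+C=F(x)$ into an open ball that a countable dense set must meet) --- the same hypothesis the paper uses, but for the superficially different purpose of establishing regular closedness. Both arguments are sound; your closing remark correctly identifies that without $\interior C\neq\emptyset$ the countable reduction would fail.
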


\begin{proof}
It is obvious that $(i)\Rightarrow (ii)\Rightarrow(iii)$. Note that the elements of $\G\sm\cb{\emptyset}$ are regular closed sets in the sense that $D=\cl\interior D$ for every $D\in\G$. Indeed, for $D\in\G\sm\{\emptyset\}$, note that $\interior D\neq\emptyset$ because for every $z\in D$ it holds $\emptyset\neq z+\interior C=\interior(z+C)\subseteq \interior (D+C)=\interior D$. Then, $D=\cl\interior D$ follows from \cite[Theorem~2.33]{rockafellar2}. Hence, by \cite[Example~14.7]{rockafellar2}, $F$ is measurable if and only if $\cb{x\in \X\mid y\in F(x)}\in\A$ for every $y\in\R^{m}$. Now fix $y\in\R^{m}$. We claim that
\begin{equation*}
F^{-1}(y-C)=\cb{x\in \X\mid y\in F(x)}.
\end{equation*}
Indeed, the $\supseteq$ part is clear since $0\in C$. For the $\subseteq$ part, let $x\in\X$ such that $F(x)\cap(y-C)\not=\emptyset$. Assume that $y\notin F(x)$. There exists $z\in C$ such that $y-z\in F(x)$. Thus, $y\in F(x)+z\subseteq F(x)+C=F(x)$, a contradiction. Thus, $y\in F(x)$. Hence, $(i)\Leftrightarrow(iii)$ follows.
\end{proof}

For set-valued functions $F, G\colon\X\rightarrow\G\sm\cb{\emptyset}$ and $\lambda\in\R$, the set-valued functions $F+G$, $F\oplus G$, $\lambda F$ are defined in the pointwise sense with the conventions of Section~\ref{uppersets}. These set-valued functions are measurable when $F, G$ are measurable; see \cite[Proposition~14.11]{rockafellar2}.

Denote by $\L^{0}(\R^{m})$ the linear space of all Borel measurable functions $f=\of{f_{1},\ldots,f_{m}}\colon\X\rightarrow\R^{m}$. For subsets of $\L^{0}(\R^{m})$, Minkowski sum and multiplication with scalars are defined analogously as in the case of the subsets of $\R^{m}$.

\begin{defn}
Let $F\in\F$. A function $f\in\L^{0}(\R^{m})$ is said to be a \textit{measurable selection} of $F$ if $f(x)\in F(x)$ for every $x\in\X$. The set of all measurable selections of $F$ is denoted by $\L^{0}(F)$.
\end{defn}

\begin{prop}
\cite[Corollary~14.6]{rockafellar2} Every $F\in\F$ has a measurable selection, that is, $\L^{0}(F)\neq\emptyset$.
\end{prop}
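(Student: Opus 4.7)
The statement is an immediate instance of the Kuratowski--Ryll-Nardzewski measurable selection theorem, so my plan is simply to indicate how the hypotheses of that classical theorem are satisfied in the present setting. The target space $\R^{m}$ is Polish, every value $F(x)$ is a nonempty closed subset of $\R^{m}$ by the definition of $\G\sm\cb{\emptyset}$, and Definition~\ref{measurability} gives $F^{-1}(D)\in\A$ for every closed $D\subseteq\R^{m}$. In particular, the preimage of every closed ball lies in $\A$, which is the precise form of measurability invoked in the classical proof.

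The construction is the usual diagonal one. Fix a countable dense sequence $(y_{k})_{k\geq 1}$ in $\R^{m}$ and build inductively measurable functions $f_{n}\colon\X\rightarrow\R^{m}$ satisfying
\begin{equation*}
\inf_{z\in F(x)}\|f_{n}(x)-z\|\leq 2^{-n}\quad\text{and}\quad \|f_{n+1}(x)-f_{n}(x)\|\leq 2^{-n+1},\quad x\in\X.
\end{equation*}
Given $f_{n}$, one partitions $\X$ measurably according to the smallest index $k$ for which $y_{k}$ lies within distance $2^{-n-1}$ of $F(x)$ and $\|f_{n}(x)-y_{k}\|\leq 2^{-n+1}$, and sets $f_{n+1}(x)=y_{k}$ on the corresponding piece. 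The first condition cuts out the set $F^{-1}\of{\cb{z\in\R^{m}\mid \|z-y_{k}\|\leq 2^{-n-1}}}$, which lies in $\A$ by Definition~\ref{measurability}; the second is measurable because $f_{n}$ is. Density of $(y_{k})$ together with closedness of each $F(x)$ ensures that these pieces cover $\X$.

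By construction $(f_{n}(x))_{n}$ is Cauchy uniformly in $x$, so it converges pointwise to a measurable $f\colon\X\rightarrow\R^{m}$. Closedness of each $F(x)$ together with the first inductive estimate forces $f(x)\in F(x)$ for all $x\in\X$, whence $f\in\L^{0}(F)$. The only nontrivial step is the measurability of the partitioning pieces at each stage of the induction, and this is precisely where the hypothesis $F\in\F$ is used; the rest is a classical Cauchy-completion argument.
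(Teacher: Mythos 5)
Your argument is correct, but it is worth noting that the paper does not prove this proposition at all: it is imported wholesale as \cite[Corollary~14.6]{rockafellar2}, where it is derived from the Castaing representation theorem, itself proved by essentially the Kuratowski--Ryll-Nardzewski construction you carry out. So you have supplied a self-contained proof of a result the authors chose to cite, and your construction is the standard one underlying the citation rather than a genuinely new route. Your reduction of the hypotheses is sound: since every open subset of $\R^{m}$ is a countable union of closed balls, measurability in the sense of Definition~3.1 (preimages of closed sets) gives measurability of preimages of closed balls, which is all the diagonal construction needs. Two small points you leave implicit but which cause no trouble: the base case $f_{1}$ requires the same partitioning argument (here one uses only nonemptiness of $F(x)$ and density of $(y_{k})$, with no constraint from a previous iterate), and closedness of $F(x)$ is not what makes the pieces cover $\X$ --- that follows from density of $(y_{k})$ together with the inductive distance estimate --- but is what lets you conclude $f(x)\in F(x)$ in the limit, as you correctly use at the end. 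The trade-off is the usual one: the citation keeps the paper short and places the burden on a standard reference, while your proof makes the paper self-contained at the cost of reproving a classical theorem whose generality (Polish target, closed nonempty values) exceeds what the special structure $\G\sm\cb{\emptyset}$ would require.
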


\section{The Aumann integral}
Let $\mu$ be a $\sigma$-finite measure on $(\X,\A)$ with $\mu(\X)>0$. The set of all functions $f\in\L^{0}(\R^{m})$ with $\int \abs{f_{i}}d\mu<+\infty$ for every $i\in\cb{1,\ldots,m}$ is denoted by $\L^{\mu}(\R^{m})$. For $f\in\L^{\mu}(\R^{m})$, define
\begin{equation*}
\int fd\mu=\of{\int f_{1}d\mu,\ldots,\int f_{m}d\mu}.
\end{equation*}

\begin{defn}\label{defAumann}
Let $F\in\F$. Denote by $\L^{\mu}(F)$ the set of all $\mu$-integrable measurable selections of $F$, that is, $\L^{\mu}(F)=\L^{\mu}(\R^{m})\cap\L^{0}(F)$. The \textit{(Aumann) integral} of $F$ with respect to $\mu$ is defined as
\begin{equation*}
\int Fd\mu=\cl\cb{\int fd\mu\mid f\in\L^{\mu}(F)}.
\end{equation*}
$F$ is said to be $\mu$-integrable if $\L^{\mu}(F)\neq\emptyset$.
\end{defn}

\begin{prop}\label{integralup}
The Aumann integral of a set-valued function $F\in\F$ is a closed convex upper set, that is, $\int Fd\mu\in\G$.
\end{prop}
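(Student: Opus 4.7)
The plan is to verify directly the three defining properties of $\G$ for $\int Fd\mu$: closedness, convexity, and invariance under $D\mapsto\cl\co(D+C)$. Set $S=\cb{\int fd\mu\mid f\in\L^{\mu}(F)}$, so that $\int Fd\mu=\cl S$ by Definition~\ref{defAumann}. If $\L^{\mu}(F)=\emptyset$, then $S=\emptyset$ and $\int Fd\mu=\emptyset$, which lies in $\G$ since $\emptyset+C=\emptyset$ by the convention of Section~\ref{uppersets}. So I may assume $\L^{\mu}(F)\neq\emptyset$.

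Next, I would establish convexity of $S$, from which closedness and convexity of $\int Fd\mu=\cl S$ are immediate. For $f,g\in\L^{\mu}(F)$ and $\lambda\in[0,1]$, the combination $h=\lambda f+(1-\lambda)g$ lies in $\L^{\mu}(\R^{m})$ by linearity, and pointwise $h(x)\in F(x)$ because each $F(x)\in\G$ is convex; hence $h\in\L^{\mu}(F)$ and $\int hd\mu=\lambda\int fd\mu+(1-\lambda)\int gd\mu\in S$.

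The key step is to show $S+C\subseteq S$, and here $\sigma$-finiteness of $\mu$ is essential. Given $f\in\L^{\mu}(F)$ and $c\in C$, choose $A\in\A$ with $0<\mu(A)<+\infty$ (available since $\mu$ is $\sigma$-finite with $\mu(\X)>0$) and define $\tilde{f}=f+\tfrac{1}{\mu(A)}\,c\,\1_{A}$. Then $\tilde{f}\in\L^{\mu}(\R^{m})$, and for every $x\in\X$ the value $\tilde{f}(x)$ lies in $F(x)+C=F(x)$ (since $F(x)\in\G$), so $\tilde{f}\in\L^{\mu}(F)$, with $\int\tilde{f}d\mu=\int fd\mu+c\in S$.

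To finish, take any $z\in\int Fd\mu=\cl S$ and $c\in C$, and write $z=\lim_{n\to\infty}y_{n}$ with $y_{n}\in S$; then $y_{n}+c\in S$ for all $n$, so $z+c=\lim_{n\to\infty}(y_{n}+c)\in\cl S=\int Fd\mu$. This yields $\int Fd\mu+C\subseteq\int Fd\mu$, and combined with the trivial inclusion $\int Fd\mu\subseteq\int Fd\mu+C$ coming from $0\in C$, together with the already-established closedness and convexity, one obtains $\int Fd\mu=\cl\co(\int Fd\mu+C)$, i.e., $\int Fd\mu\in\G$. The main (and really only) subtlety is the $\sigma$-finiteness step: the constant function $x\mapsto c$ need not be $\mu$-integrable, so the shift by $c$ must be localized to a set of finite positive measure; every other piece follows from convexity of the values $F(x)$ together with elementary properties of the closure in $\R^{m}$.
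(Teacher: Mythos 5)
Your proposal is correct and follows essentially the same route as the paper's proof: convexity of the set of selection integrals via convexity of the values $F(x)$, and the upper-set property $S+C\subseteq S$ via the same localization trick of shifting a selection by $(\mu(A))^{-1}\1_{A}z$ on a set $A$ of finite positive measure, then passing to closures. The only (harmless) differences are the order of the two steps and your explicit treatment of the case $\L^{\mu}(F)=\emptyset$, which the paper leaves implicit.
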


The proof of Proposition~\ref{integralup} is given in Section~\ref{appendix1}.

\begin{prop}\label{selectionsum}
Let $F, G\in\F$ be $\mu$-integrable set-valued functions. Then,
\begin{equation*}
\int \of{F\oplus G}d\mu=\int Fd\mu\oplus\int Gd\mu.
\end{equation*}
\end{prop}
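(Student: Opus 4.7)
The plan is to establish the two inclusions separately. For the easy direction, $\int F\, d\mu \oplus \int G\, d\mu \subseteq \int(F \oplus G)\, d\mu$, I would observe that for any $f \in \L^{\mu}(F)$ and $g \in \L^{\mu}(G)$, the sum $f+g$ lies in $\L^{\mu}(F \oplus G)$ because $(f+g)(x) \in F(x)+G(x) \subseteq F(x) \oplus G(x)$, and linearity of the Lebesgue integral yields $\int(f+g)\, d\mu = \int f\, d\mu + \int g\, d\mu$. Hence the Minkowski sum $\cb{\int f\, d\mu \mid f \in \L^{\mu}(F)} + \cb{\int g\, d\mu \mid g \in \L^{\mu}(G)}$ is contained in $\int(F \oplus G)\, d\mu$, which is already closed by Proposition~\ref{integralup}. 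Passing to closures via the elementary inclusion $\cl A + \cl B \subseteq \cl(A + B)$ (valid for arbitrary $A, B \subseteq \R^{m}$), one obtains $\int F\, d\mu \oplus \int G\, d\mu \subseteq \int(F \oplus G)\, d\mu$.

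For the reverse inclusion I would use the halfspace representation \eqref{separation}. Since both sides lie in $\G$, it suffices to show that for every $w \in C^{+}\sm\cb{0}$,
\begin{equation*}
\inf_{y \in \int(F \oplus G)\, d\mu} \ip{y, w} = \inf_{y \in \int F\, d\mu \oplus \int G\, d\mu} \ip{y, w},
\end{equation*}
with $\leq$ being immediate from the first step. The right-hand side factors as $\inf_{y \in \int F\, d\mu}\ip{y,w} + \inf_{y \in \int G\, d\mu}\ip{y,w}$, since taking closure of a Minkowski sum leaves the infimum of a continuous linear functional unchanged. The crux is then a support-function formula for the Aumann integral: for any $\mu$-integrable $H \in \F$ and $w \in C^{+}\sm\cb{0}$,
\begin{equation*}
\inf_{y \in \int H\, d\mu} \ip{y, w} = \int \inf_{y \in H(x)} \ip{y, w}\, d\mu(x).
\end{equation*}
Applying this to $H = F \oplus G$ and invoking the pointwise additivity $\inf_{y \in F(x) \oplus G(x)} \ip{y,w} = \inf_{y \in F(x)} \ip{y,w} + \inf_{y \in G(x)} \ip{y,w}$, followed by the same formula applied to $H = F$ and $H = G$ and linearity of the ordinary Lebesgue integral, closes the chain of equalities.

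I expect the support-function formula above to be the technical heart of the argument. Its $\geq$ direction is trivial from the pointwise bound $\ip{f(x), w} \geq \inf_{y \in H(x)} \ip{y, w}$ for $f \in \L^{\mu}(H)$. For the reverse direction one needs, for each $\epsilon > 0$, a measurable selection $f_{\epsilon} \in \L^{\mu}(H)$ with $\ip{f_{\epsilon}(x), w} \leq \inf_{y \in H(x)} \ip{y,w} + \epsilon$ pointwise $\mu$-almost everywhere. Measurability of $x \mapsto \inf_{y \in H(x)} \ip{y, w}$ follows from Proposition~\ref{mainmeasurable}, and the required selection can be extracted by a standard measurable selection theorem applied to the closed-valued measurable map $x \mapsto \cb{y \in H(x) \mid \ip{y, w} \leq \inf_{y' \in H(x)}\ip{y', w} + \epsilon}$. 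The $\mu$-integrability of $H$ ensures that $\inf_{y \in H(x)} \ip{y, w}$ is finite and integrable $\mu$-a.e., so all integrals above are well-defined.
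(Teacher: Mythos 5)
The paper does not actually prove this proposition; it is a one-line citation of \cite[Theorem~5.4]{survey}. Your argument is therefore doing more work than the paper does, and its overall architecture is sound: the inclusion $\int F\,d\mu\oplus\int G\,d\mu\subseteq\int(F\oplus G)\,d\mu$ via sums of integrable selections is correct, and the reverse inclusion via support functions is legitimate because both sides lie in $\G$ and \eqref{separation} identifies elements of $\G$ by their support functions. Note, however, that the ``support-function formula'' you isolate as the technical heart is precisely Proposition~\ref{convexAumann} of the paper, which is quoted from the very same \cite[Theorem~5.4]{survey}; within the logic of the paper you could simply invoke it (it is an independent fact from the literature, not derived from the additivity you are proving), and then your proof closes cleanly.

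If instead you insist on proving the support-function formula yourself, your sketch has two genuine gaps. First, the claim that $\mu$-integrability of $H$ makes $x\mapsto\inf_{y\in H(x)}\ip{y,w}$ finite $\mu$-a.e.\ is false: an integrable selection only bounds this infimum from above, and Example~\ref{computeG} of the paper exhibits halfspace-valued $H$ for which the infimum is identically $-\infty$ for all $w$ not proportional to the normal. The formula still holds with value $-\infty$, but your $\varepsilon$-argument must be replaced by a truncation argument (selections achieving $\min\{\ip{y,w},-n\}$ on sets of finite measure) to drive the integral to $-\infty$. Second, even when the infimum is finite, a measurable selection of the near-minimizer map $x\mapsto\{y\in H(x)\mid\ip{y,w}\le\inf_{y'\in H(x)}\ip{y',w}+\varepsilon\}$ need not be $\mu$-integrable, since that set is typically unbounded (a slab); one must combine the near-optimal selection on a set of finite measure with a fixed integrable selection elsewhere, exploiting $\sigma$-finiteness and the decomposability of $\L^{\mu}(H)$. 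Both gaps are repairable and are exactly what the cited external theorem handles, but as written your sketch does not close them.
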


\begin{proof}
The result is provided by \cite[Theorem~5.4]{survey}.
\end{proof}

\begin{prop}\label{poshom-int}
For every $F\in\F$ and $\lambda\in\R_{+}$, it holds $\int \lambda F d\mu=\lambda\int Fd\mu$. In particular, if $F\equiv C$, then $\int Fd\mu = C$.
\end{prop}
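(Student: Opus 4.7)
The plan is to split the argument according to whether $\lambda>0$ or $\lambda=0$, reducing the second case to the ``in particular'' statement $\int C\, d\mu = C$.

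For $\lambda>0$, the pointwise definition $\lambda F(x)=\cb{\lambda d \mid d\in F(x)}$ together with the bijection $f\mapsto \lambda f$ on $\L^{\mu}(\R^{m})$ gives $\L^{\mu}(\lambda F)=\lambda\,\L^{\mu}(F)$. Linearity of the classical Lebesgue integral then yields
\[
\int \lambda F\, d\mu = \cl\cb{\lambda \int f\, d\mu \mid f\in\L^{\mu}(F)}.
\]
Since $z\mapsto \lambda z$ is a homeomorphism of $\R^{m}$ for $\lambda>0$, closure commutes with scaling by $\lambda$, so the right-hand side equals $\lambda \int F\, d\mu$. In the subcase $\L^{\mu}(F)=\emptyset$, both sides equal $\emptyset$ by the convention $\lambda\emptyset=\emptyset$ for $\lambda\neq 0$.

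For $\lambda=0$, the convention $0F(x)=C$ makes $0F$ the constant set-valued function $C$, while $0\int F\, d\mu = C$ by the scalar-multiplication convention on $\G$ regardless of whether $\L^{\mu}(F)$ is empty. So the full claim reduces to $\int C\, d\mu = C$, where $C$ now denotes the constant function.

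To establish $\int C\, d\mu = C$, I would prove two inclusions. For the ``$\subseteq$'' part, any $\mu$-integrable selection $f$ of $C$ satisfies $f(x)\in C$ pointwise, so for every $w\in C^{+}$ one has $\ip{f(x),w}\geq 0$; integrating yields $\ip{\int f\, d\mu,w}\geq 0$ for all $w\in C^{+}$, and since $C=(C^{+})^{+}$ (because $C$ is a closed convex cone containing $0$), this forces $\int f\, d\mu\in C$. Closedness of $C$ preserves the inclusion under the outer $\cl$. For the ``$\supseteq$'' part, $\sigma$-finiteness of $\mu$ combined with $\mu(\X)>0$ supplies some $A\in\A$ with $0<\mu(A)<\infty$; for any $z\in C$, the cone property yields $\frac{z}{\mu(A)}\in C$, so $f=\frac{1}{\mu(A)}z\,\1_{A}$ is a measurable integrable selection of $C$ with $\int f\, d\mu = z$. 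I do not expect any serious obstacle here; the only care required is the bookkeeping of empty selections in the $\lambda>0$ subcase and of the $\lambda=0$ convention in the reduction, together with the standard cone/dual-cone argument for the constant case.
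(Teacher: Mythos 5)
Your proposal is correct and follows essentially the same route as the paper: treat $\lambda>0$ directly (the paper dismisses this as obvious), reduce $\lambda=0$ to showing $\int C\,d\mu=C$, and prove $\int C\,d\mu\subseteq C$ by the same dual-cone argument. The only small difference is in the reverse inclusion $C\subseteq\int C\,d\mu$, where the paper just uses $0\in\L^{\mu}(C)$ together with Proposition~\ref{integralup} (so that $C=\cl(0+C)\subseteq\int C\,d\mu\oplus C=\int C\,d\mu$), whereas you realize each $z\in C$ explicitly as $\int f\,d\mu$ for the selection $f=\mu(A)^{-1}z\,1_{A}$ --- which is the same construction used inside the paper's proof of Proposition~\ref{integralup}, so both are fine.
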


\begin{proof}
The result is obvious when $\lambda>0$. The case $\lambda=0$ follows from the convention $0D=C$ for $D\in\G$ once we show that $\int C d\mu = C$. Indeed, it is clear that $0\in\L^{\mu}(C)$, hence $C = \cl(0+C) \subseteq  \int C d\mu \oplus C=\int C d\mu$ by Proposition~\ref{integralup}. Conversely, if $f\in\L^{\mu}(C)$, then, for every $w\in C^{+}\sm\{0\}$, we have $\ip{f(x), w}\geq 0$ for every $x\in\X$ and hence $\ip{\int fd\mu, w}\geq 0$. Therefore, $\int fd\mu \in C$. This proves $\int Cd\mu = C$.
\end{proof}

Given $A\in\A$, define the function $\mathbf{1}_{A} F\colon\X\rightarrow\G$ for $x\in\X$ by
\begin{equation}\label{indicatorwithfunction}
(\mathbf{1}_{A}F)(x)=\begin{cases}F(x)&\text{ if }x\in A\\  C&\text{ else}\end{cases}.
\end{equation}
Note that $\mathbf{1}_{A}F$ is not the same as the function $1_{A}F$ given by
\begin{equation*}
(1_{A}F)(x)=1_{A}(x) F(x)=\begin{cases}F(x)&\text{ if }x\in A\\\{0\}&\text{ else}\end{cases},
\end{equation*}
for $x\in\X$. Indeed, we have $\mathbf{1}_{A}F=1_{A}F+1_{A^{c}} C$. Since we work with functions whose values are closed convex upper sets, it is often more advantageous in our framework to work with $\mathbf{1}_{A}F$ rather than $1_{A}F$.

\begin{defn}\label{integraloverset}
Let $F\in\F$ and $A\in\A$. The integral of $F$ over the set $A$ with respect to $\mu$ is defined as
\begin{equation*}
\int_{A}F d\mu=\int \mathbf{1}_{A}F d\mu.
\end{equation*}
\end{defn}

\begin{prop}\label{indicatorintegral}
For every $F\in\F$ and $A\in\A$, it holds
\begin{equation*}
\int_{A}F d\mu=\cb{\int_{A}f d\mu\mid f\in \L^{\mu}(F)}\oplus C.
\end{equation*}
\end{prop}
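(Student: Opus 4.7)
The plan is to prove the two set inclusions separately, exploiting the structural decomposition of selections. By definition, $\int_A F d\mu = \int \mathbf{1}_A F d\mu$, and the key observation is that any $g \in \L^\mu(\mathbf{1}_A F)$ splits into ``a selection of $F$ on $A$'' plus ``a $C$-valued selection on $A^c$'', which matches the two pieces on the right-hand side.

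For the ``$\supseteq$'' direction, I would fix $f \in \L^\mu(F)$ and $c \in C$ and explicitly construct $g \in \L^\mu(\mathbf{1}_A F)$ whose integral equals $\int_A f d\mu + c$. Using the $\sigma$-finiteness of $\mu$, I would select $B \subseteq A^c$ with $0 < \mu(B) < \infty$ and set $g$ equal to $f$ on $A$, the constant $c/\mu(B)$ on $B$, and $0$ on the rest of $A^c$. One checks that $g$ is a $\mu$-integrable measurable selection of $\mathbf{1}_A F$ (using $c/\mu(B), 0 \in C$) with $\int g d\mu = \int_A f d\mu + c$, hence $\int_A f d\mu + c \in \int_A F d\mu$. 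Closure of the LHS, which is a closed convex upper set by Proposition~\ref{integralup}, then absorbs the $\oplus C$ on the right. The degenerate case $\mu(A^c) = 0$ is handled directly via Proposition~\ref{integralup}, since $\int_A F d\mu$ is already invariant under $\oplus C$.

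For the ``$\subseteq$'' direction, I would take $y \in \int \mathbf{1}_A F d\mu$ and, by definition of the Aumann integral, write $y = \lim_n \int g_n d\mu$ with $g_n \in \L^\mu(\mathbf{1}_A F)$. Each integral splits as $\int_A g_n d\mu + \int_{A^c} g_n d\mu$; the second term lies in $C$ by exactly the support-functional argument from the proof of Proposition~\ref{poshom-int}. To rewrite the first term as an integral of a global selection of $F$, I would fix any $f_0 \in \L^\mu(F)$ and set $f_n = 1_A g_n + 1_{A^c} f_0$, which is a $\mu$-integrable selection of $F$ satisfying $\int_A f_n d\mu = \int_A g_n d\mu$. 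Thus each $\int g_n d\mu$ belongs to $\{\int_A f d\mu : f \in \L^\mu(F)\} + C$, and passing to the limit within the closed set on the right concludes the inclusion.

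The main technical step is the extension $f_n = 1_A g_n + 1_{A^c} f_0$ in the reverse inclusion, which implicitly requires $\L^\mu(F) \neq \emptyset$; this is the natural $\mu$-integrability regime, analogous to the standing hypothesis in Proposition~\ref{selectionsum}. Beyond this, the only delicate bookkeeping is making sure closures interact correctly with the $\oplus C$ operation, which reduces to the identity $\cl(E + C) = E \oplus C$ for an arbitrary set $E$ together with the closedness furnished by Proposition~\ref{integralup}.
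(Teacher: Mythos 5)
Your proof is correct and follows essentially the same route as the paper's: both decompose the integral of $\mathbf{1}_{A}F$ into its $A$- and $A^{c}$-parts, use the splicing $1_{A}g+1_{A^{c}}f_{0}$ with a fixed selection $f_{0}\in\L^{\mu}(F)$ to convert selections of $\mathbf{1}_{A}F$ into selections of $F$, identify the $A^{c}$-contribution with $C$ via the support-functional argument and $\sigma$-finiteness, and treat $\mu(A^{c})=0$ separately. The implicit assumption $\L^{\mu}(F)\neq\emptyset$ that you flag is present in the paper's own proof as well (it fixes an arbitrary $g^{0}\in\L^{\mu}(F)$), so this is not a deviation.
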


The proof of Proposition~\ref{indicatorintegral} is given in Section~\ref{appendix2}.

\begin{rem}
When $\mu(A)>0$ in Proposition~\ref{indicatorintegral}, one has $\int_{A}Fd\mu=\cl\cb{\int_{A}fd\mu\mid f\in\L^{\mu}(F)}$, which coincides with the usual definition of (the closure of) the Aumann integral of $F$ over the set $A$ as in \cite{survey}. When $\mu(A)=0$, Proposition~\ref{indicatorintegral} gives $\int_{A}Fd\mu=C$ which deviates from the classical definition $\cl\cb{\int_{A}fd\mu\mid f\in\L^{\mu}(F)}=\cb{0}$. This is in total analogy to Definition~\ref{defAumann} and aligns with the algebraic structure on $\G$.
\end{rem}

Since closed convex sets are identified by their support functions, it is natural to consider the relationship between the Aumann integral and the (random) support function of $F\in\F$. Proposition~\ref{convexAumann} below shows that Aumann integration and computing support functions commute. It is a well-known result in Aumann integration and valid in a more general framework; see \cite[Theorem~5.4]{survey}.
In the context of the present paper, it leads to the representation \eqref{repr} of the Aumann integral as given below.

\begin{prop}\label{convexAumann}
Let $F\in\F$ be a $\mu$-integrable set-valued function. For every $w\in C^{+}\sm\{0\}$, it holds
\begin{equation*}
\inf_{f\in \L^{\mu}(F)}\int \ip{f, w}d\mu=\int\inf_{y\in F(x)} \ip{y, w}\mu(dx).
\end{equation*}
In particular,
\begin{equation}\label{repr}
\int Fd\mu=\bigcap_{w\in C^{+}\setminus\{0\}}\cb{z\in\R^{m}\mid \ip{z, w}\geq \int\inf_{y\in F(x)} \ip{y, w}\mu(dx)},
\end{equation}
\end{prop}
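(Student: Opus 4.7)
The plan is in two steps: first, to establish the scalar identity
\[
\inf_{f \in \L^{\mu}(F)} \int \ip{f, w} d\mu = \int g(x)\, \mu(dx),
\]
where $g(x) = \inf_{y \in F(x)} \ip{y, w}$, and second, to combine this identity with \eqref{separation} applied to the closed convex upper set $\int F d\mu$ in order to derive \eqref{repr}.

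For the scalar identity, the $\geq$ direction is immediate: every $f \in \L^{\mu}(F)$ satisfies $\ip{f(x), w} \geq g(x)$ pointwise, and integration preserves this inequality. For the reverse, I would fix a reference selection $f_{0} \in \L^{\mu}(F)$, available since $F$ is $\mu$-integrable, and for each $n$ introduce the set-valued function
\[
F_{n}(x) = F(x) \cap \cb{y \in \R^{m} : \ip{y, w} \leq \min(\ip{f_{0}(x), w},\; g(x) + 1/n)}.
\]
Measurability of $g$ follows from the measurability of $F$ and Proposition \ref{mainmeasurable}, by approximating the pointwise infimum through a countable family of selections, so $F_{n}$ is measurable with nonempty closed values and admits a measurable selection $f_{n}$ by \cite[Corollary~14.6]{rockafellar2}. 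The sandwich $g(x) \leq \ip{f_{n}(x), w} \leq \ip{f_{0}(x), w}$ makes $f_{n}$ a $\mu$-integrable selection and lets dominated convergence drive $\int \ip{f_{n}, w} d\mu$ down to $\int g\, d\mu$ as $n \to \infty$, yielding the reverse inequality. This is the argument underlying the cited \cite[Theorem~5.4]{survey}.

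For \eqref{repr}, Proposition \ref{integralup} gives $\int F d\mu \in \G$, so \eqref{separation} yields
\[
\int F d\mu = \bigcap_{w \in C^{+} \setminus \{0\}} \cb{z \in \R^{m} : \ip{z, w} \geq \inf_{y \in \int F d\mu} \ip{y, w}}.
\]
Since $y \mapsto \ip{y, w}$ is continuous, taking the infimum over the closure $\int F d\mu = \cl \cb{\int f d\mu : f \in \L^{\mu}(F)}$ returns the same value as taking it over the pre-closure set, which by linearity of the scalar integral equals $\inf_{f \in \L^{\mu}(F)} \int \ip{f, w} d\mu$. Substituting the scalar identity produces exactly \eqref{repr}.

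The only genuine difficulty is the reverse inequality of the scalar identity: keeping the approximating selections $\mu$-integrable while pushing them toward the pointwise infimum $g$, which is resolved by the upper truncation through $f_{0}$. The remainder is just assembling \eqref{separation}, the continuity of $\ip{\cdot, w}$, and the linearity of the integral.
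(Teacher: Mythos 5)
Your second step is exactly the paper's argument: the paper derives \eqref{repr} from \eqref{separation} together with the observation that $\inf_{z\in\int Fd\mu}\ip{z,w}=\inf_{f\in\L^{\mu}(F)}\int\ip{f,w}d\mu$, and your closure/continuity remark is that same observation. For the scalar identity, however, the paper gives no proof at all --- it simply cites \cite[Theorem~5.4]{survey} --- so your selection-theoretic argument is an attempt to supply what the paper outsources, and as written it has two genuine gaps.

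First, $g(x)=\inf_{y\in F(x)}\ip{y,w}$ can equal $-\infty$ on a set of positive measure; this happens already for the $\mu$-integrable halfspace-valued functions of Example~\ref{computeG}, whose support function in every direction not proportional to the defining normal is identically $-\infty$. At such $x$ your set $F_{n}(x)$ is empty, since no $y$ satisfies $\ip{y,w}\leq-\infty$, and the measurable selection theorem cannot be applied. This part is repairable: replace $g(x)+1/n$ by $\max\cb{g(x)+1/n,\,-n}$, and replace dominated convergence by the reverse Fatou lemma with the integrable majorant $\ip{f_{0},w}$, since $g$ need not admit an integrable minorant. Second, and less easily repaired: a measurable selection $f_{n}$ of $F_{n}$ is controlled only in the single direction $w$. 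The sandwich $g\leq\ip{f_{n},w}\leq\ip{f_{0},w}$ bounds one linear functional of $f_{n}$, not $\abs{f_{n}}$; when $F(x)$ is halfspace-valued, $F_{n}(x)$ is an unbounded slab and the selection theorem may return an $f_{n}$ that is not in $\L^{\mu}(\R^{m})$, hence not in $\L^{\mu}(F)$, so it is not an admissible competitor in $\inf_{f\in\L^{\mu}(F)}\int\ip{f,w}d\mu$. Closing this requires either a selection with a quantitative size bound (for instance the metric projection of $f_{0}(x)$ onto $F_{n}(x)$, together with an integrability estimate for the displacement) or a truncation argument on an exhausting sequence of finite-measure sets combined with the decomposability of $\L^{\mu}(F)$ --- which is essentially the content of the cited theorem of Hess. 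So the idea is the standard one, but the reverse inequality of the scalar identity is not yet established.
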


\begin{proof}
The first part of the result is provided by \cite[Theorem~5.4]{survey}. The second part follows directly from \eqref{separation} and the observation that $\inf_{z\in \int Fd\mu}\ip{z, w}=\inf_{f\in\L^{\mu}(F)}\int \ip{f, w}d\mu$ for $w\in C^{+}\sm\cb{0}$.
\end{proof}

Using Proposition~\ref{convexAumann}, we compute the integrals of two particular functions next. The results of these examples will be used in the proof of Theorem~\ref{daniellstone}, the main result of the paper. Let us denote the halfspace with normal vector $w\in C^{+}\sm\{0\}$ by \[H(w)=\cb{z\in\R^{m}\mid \ip{z, w}\geq 0}.\]

\begin{example}
\label{computeG}
Let $w\in C^{+}\sm\{0\}$ and $\xi\colon\X\rightarrow \R\cup\cb{-\infty}$ be a Borel measurable function with $\int \xi^{+}d\mu<+\infty$, where $\xi^{+}\coloneqq\max\cb{\xi, 0}$. Let $F(x)=\cb{z\in\R^{m}\mid \ip{z, w}\geq \xi(x)}$ for $x\in\X$. Hence, $F\in\F$ and, for every $\bar{w}\in C^{+}\sm\{0\}$ and $x\in\X$,
\begin{equation*}
\inf_{y\in F(x)}\ip{y, \bar{w}}=\begin{cases}k\xi(x) & \text{if }\bar{w}=kw\text{ for some }k\in(0,+\infty)\\-\infty & \text{ else}\end{cases}.
\end{equation*}
Since $\int \xi^{+}d\mu<+\infty$, it is easy to check that $f\coloneqq \frac{\xi^{+}}{\ip{c, w}}c\in\L^{\mu}(F)$, where $c\in \interior C$ is some fixed point. Hence, $F$ is $\mu$-integrable. Note that, for every $\bar{w}\in C^{+}\sm\cb{0}$,
\begin{equation*}
\int \inf_{y\in F(x)}\ip{y, \bar{w}}\mu(dx)=\begin{cases}k\int \xi d\mu & \text{if }\bar{w}=kw\text{ for some }k\in (0,+\infty)\\-\infty & \text{ else}\end{cases}.
\end{equation*}
Hence, by Proposition~\ref{convexAumann},
\begin{equation*}
\int Fd\mu = \bigcap_{k\in(0,+\infty)}\cb{z\in\R^{m}\mid \ip{z, kw}\geq k\int \xi d\mu} = \cb{z\in\R^{m} \mid \ip{z, w}\geq \int \xi d\mu}.
\end{equation*}
In particular, if $F\equiv H(w)$, that is, if $\xi\equiv 0$, then $\int Fd\mu = H(w)$.
\end{example}

\begin{example}\label{computeC}
Let $f\in\L^{\mu}(\R^{m})$ and $F(x)=f(x)+ C$ for $x\in\X$. Clearly $F\in\F$ is $\mu$-integrable. Note that, for every $w\in C^{+}\sm\{0\}$,
\begin{equation*}
\int \inf_{y\in F(x)}\ip{y, w}\mu(dx) = \int \of{\ip{f(x), w}+\inf_{y\in C}\ip{y, w}}\mu(dx)=\int \ip{f, w}d\mu=\ip{\int fd\mu, w}.
\end{equation*}
Hence, by Proposition~\ref{convexAumann},
\begin{equation*}
\int Fd\mu = \bigcap_{w\in C^{+}\setminus\{0\}}\cb{z\in\R^{m}\mid \ip{z, w}\geq \ip{\int fd\mu, w}} = \bigcap_{w\in C^{+}\setminus\{0\}}\of{\int fd\mu + H(w)} = \int fd\mu + C.
\end{equation*}
\end{example}

In Lebesgue integration, one version of the monotone convergence theorem states that the integral of the decreasing limit of measurable functions (dominated by some integrable function) is given as the decreasing limit of the corresponding integrals. In Proposition~\ref{contabove} below, a similar convergence result is proven for the Aumann integrals.

\begin{prop}\label{contabove}
Let $(F_{n})_{n\in\mathbb{N}}$ and $F$ be in $\F$. Suppose that $(F_{n})_{n\in\mathbb{N}}$ decreases to $F$ in the sense that, for $\mu$-a.e. $x\in\X$, $F_{n}(x)\subseteq F_{n+1}(x)$, $n\in\mathbb{N}$,  and $\cl\bigcup_{n\in\mathbb{N}} F_{n}(x)=F(x)$. Suppose that $F_{1}$ is $\mu$-integrable. Then, for every $n\in\mathbb{N}$, \[\int F_{n}d\mu\subseteq \int F_{n+1}d\mu,\] and \[\cl\bigcup_{n\in\mathbb{N}}\int F_{n}d\mu=\int Fd\mu.\]
\end{prop}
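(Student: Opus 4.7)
The plan is to dualize the statement into the language of support functions via the representation \eqref{repr} of Proposition~\ref{convexAumann} and then apply Lebesgue's monotone convergence direction by direction. Set $g_n^{w}(x)\coloneqq\inf_{y\in F_n(x)}\ip{y,w}$ and $g^{w}(x)\coloneqq\inf_{y\in F(x)}\ip{y,w}$ for $w\in C^{+}\sm\{0\}$. The first claim $\int F_n d\mu\subseteq\int F_{n+1}d\mu$ is then immediate: $F_n(x)\subseteq F_{n+1}(x)$ $\mu$-a.e.\ gives $g_n^{w}\ge g_{n+1}^{w}$ a.e.\ and hence $\int g_n^{w}d\mu\ge\int g_{n+1}^{w}d\mu$, so the defining halfspace constraints in \eqref{repr} become only looser as $n$ grows.

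For the second claim I would first establish the pointwise convergence $g_n^{w}\downarrow g^{w}$ and then lift it to integrals. Since $\inf\ip{\cdot,w}$ is unchanged by closure and the $F_n(x)$ are nested in set-inclusion, for $\mu$-a.e.\ $x$,
\[\inf_{n}g_n^{w}(x)=\inf_{y\in\bigcup_{n}F_n(x)}\ip{y,w}=\inf_{y\in\cl\bigcup_{n}F_n(x)}\ip{y,w}=g^{w}(x).\]
To pass to integrals I would apply the standard MCT to the nonnegative increasing sequence $g_1^{w}-g_n^{w}\uparrow g_1^{w}-g^{w}$. Two cases must be separated to handle the possibility of $-\infty$: if $\mu(\cb{g_1^{w}=-\infty})>0$ or $\int g_1^{w}d\mu=-\infty$, then monotonicity in $n$ forces every $\int g_n^{w}d\mu$ and $\int g^{w}d\mu$ to equal $-\infty$ and the desired convergence is automatic; otherwise $g_1^{w}$ is a.e.\ finite with $\int g_1^{w}d\mu<+\infty$, since by Proposition~\ref{convexAumann} and the $\mu$-integrability of $F_1$ one has $\int g_1^{w}d\mu\le \int\ip{f_1,w}d\mu<+\infty$ for any fixed $f_1\in\L^{\mu}(F_1)$, and the MCT applied to the nonnegative sequence above yields $\int g_n^{w}d\mu\downarrow\int g^{w}d\mu$.

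Finally I assemble the set-valued conclusion. Each $\int F_n d\mu\in\G$ satisfies $\int F_n d\mu+C=\int F_n d\mu$, so $\bigcup_{n}\int F_n d\mu$ is convex (nested union of convex sets) and upper; consequently $\cl\bigcup_{n}\int F_n d\mu\in\G$. For every $w\in C^{+}\sm\{0\}$,
\[\inf_{z\in\cl\bigcup_{n}\int F_n d\mu}\ip{z,w}=\inf_{n}\inf_{z\in\int F_n d\mu}\ip{z,w}=\inf_{n}\int g_n^{w}d\mu=\int g^{w}d\mu=\inf_{z\in\int F d\mu}\ip{z,w},\]
where the outer equalities use \eqref{repr}. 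Two elements of $\G$ with identical support values on $C^{+}\sm\{0\}$ must coincide by the representation \eqref{separation}, finishing the proof.

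The main obstacle I anticipate is the $-\infty$ bookkeeping in the scalar MCT step; the rest of the argument is a direct translation between set inclusions and scalar inequalities via \eqref{repr} and \eqref{separation}, and the verification that $\cl\bigcup_{n}\int F_n d\mu$ is still a closed convex upper set is routine.
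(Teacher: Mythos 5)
Your argument is correct, but it is genuinely different from what the paper does: the paper's proof of Proposition~\ref{contabove} is a one-line reduction to Theorem~1.44 of Molchanov (stated there for probability measures, with the remark that the MCT-based proof carries over to the $\sigma$-finite case), whereas you give a self-contained proof entirely inside the paper's own toolkit, dualizing via the support-function representation \eqref{repr} of Proposition~\ref{convexAumann}, applying the scalar monotone convergence theorem direction by direction in $w\in C^{+}\sm\{0\}$, and reassembling with \eqref{separation}. Your route buys self-containedness and makes the $-\infty$ bookkeeping explicit (your case split is handled correctly: $(g_1^{w})^{+}$ is integrable because $g_1^{w}\le\ip{f_1,w}$ for $f_1\in\L^{\mu}(F_1)$, so each $\int g_n^{w}d\mu$ is well defined in $[-\infty,+\infty)$, and the degenerate case $\int g_1^{w}d\mu=-\infty$ just makes the $w$-constraint vacuous for all $n$ and for $F$); the citation route is shorter but leaves the reader to check that Molchanov's argument survives the passage from probability to $\sigma$-finite measures. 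Two routine points you should state explicitly rather than use silently: each $F_n$ and $F$ must be $\mu$-integrable for \eqref{repr} to apply to them, which follows by modifying a fixed $f_1\in\L^{\mu}(F_1)$ on the exceptional null set (the a.e.\ inclusions $F_1(x)\subseteq F_n(x)\subseteq F(x)$ do not literally give $\L^{\mu}(F_1)\subseteq\L^{\mu}(F_n)$ under the paper's everywhere-selection convention, but they do give equality of the relevant integrals); and the verification that $\cl\bigcup_n\int F_n d\mu\in\G$, which you correctly flag as routine, uses that a nested union of the convex upper sets $\int F_n d\mu\in\G$ (Proposition~\ref{integralup}) is convex and satisfies $E+C=E$, so its closure lies in $\G$.
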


\begin{proof}
When $\mu$ is a probability measure, the result is provided by Theorem~1.44 of \cite[Chapter 2]{Molchanov}. The proof of that case relies on the classical monotone convergence theorem and works in the general $\sigma$-finite case as well.
\end{proof}

\section{The characterization theorem}

The aim of this section is to present a Daniell-Stone type characterization theorem for Aumann integrals of set-valued functions in $\F$, which is the set of all measurable set-valued functions $F\colon\X\rightarrow\G\sm\cb{\emptyset}$. Recall that $\G=\cb{D\subseteq \R^{m}\mid D=\cl\co\of{D+C}}$, where $C\neq\R^{m}$ is a closed convex cone with $\interior C\neq\emptyset$. Let $c\in\interior C$ be fixed.

\begin{thm}\label{daniellstone}
Let $\Phi\colon\F\rightarrow\G$ be a set-valued functional. Consider the following properties:
\begin{enumerate}
\item[\textup{\textbf{(A)}}] \textup{\textbf{Additivity:}} For every $F, G\in\F$ with $\Phi(F)\neq\emptyset$ and $\Phi(G)\neq\emptyset$, it holds $\Phi\of{F\oplus G}=\Phi(F)\oplus \Phi(G)$.
\item[\textup{\textbf{(P)}}] \textup{\textbf{Positive homogeneity:}} For every $F\in\F$ and $\lambda\in\R_{+}$, it holds $\Phi(\lambda F)=\lambda \Phi(F)$.
\item[\textup{\textbf{(C)}}] \textup{\textbf{Continuity from above:}} For every $(F_{n})_{n\in\mathbb{N}}$, $F$ in $\F$ with $\Phi(F_{1})\neq\emptyset$, $F_{n}(x)\subseteq F_{n+1}(x)$ for every $n\in\mathbb{N}$ and $x\in\X$, and $\cl\bigcup_{n\in\mathbb{N}} F_{n}(x)=F(x)$ for every $x\in \X$, it holds $\Phi(F_{n})\subseteq \Phi(F_{n+1})$ for every $n\in\mathbb{N}$, and $\cl\bigcup_{n\in\mathbb{N}} \Phi(F_{n})=\Phi(F)$.
\item[\textup{\textbf{(N)}}] \textup{\textbf{Nullity:}} If $F\equiv H(w)$ for some $w\in C^{+}\sm\cb{0}$, then $\Phi(F)=H(w)$.
\item[\textup{\textbf{(I) }}] \textup{\textbf{Indicator property:}} For every $\xi\in\L^{0}(\R_{+})$, either $\Phi(\xi c+C)=\emptyset$, or else there exists $k\in\R_{+}$ such that $\Phi(\xi c+C)=k c+C$. In addition, there exists at least one $\xi \in \L^{0}(\R_{++})$ such that $\Phi(\xi c+C)\notin\cb{\emptyset, C}$.
\item[\textup{\textbf{(S)}}] \textup{\textbf{Interchangeability with supporting halfspaces:}} Let $F\in\F$ with $\Phi(F)\neq\emptyset$. For $x\in\X$, $w\in C^{+}\sm\{0\}$, define the supporting halfspace of $F(x)$ with normal $w\in C^{+}\sm\{0\}$ by
\begin{equation}\label{supportinghalfspace}
F^{w}(x)=F(x)\oplus H(w)=\cb{z\in\R^{m}\mid \ip{z, w}\geq\inf_{y\in F(x)}\ip{y, w}}.
\end{equation}
Then, it holds $\Phi(F)=\bigcap_{w\in C^{+}\setminus\{0\}}\Phi(F^{w})$.
\end{enumerate}
$\Phi$ satisfies the above properties if and only if there exists a unique nonzero $\sigma$-finite measure $\mu$ on $(\X,\A)$ such that, for every $\mu$-integrable $F\in\F$,
\begin{equation*}
\Phi(F)=\int Fd\mu.
\end{equation*}
\end{thm}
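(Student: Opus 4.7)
The plan is to split the proof into a necessity direction, where the properties are read off from earlier results in the paper, and a sufficiency direction, which uses the three geometric properties (N), (I), (S) together with (A), (P), (C) to reduce the set-valued claim to the classical Daniell-Stone theorem. For necessity, (A), (P), (C), (S) are immediate from Propositions~\ref{selectionsum}, \ref{poshom-int}, \ref{contabove}, and \ref{convexAumann} respectively; (N) is the $\xi\equiv 0$ case of Example~\ref{computeG}; and (I) follows from Example~\ref{computeC} applied to $f=\xi c$, with the supplementary clause verified by choosing any strictly positive $\mu$-integrable $\xi$, which exists because $\mu$ is nonzero and $\sigma$-finite.

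For sufficiency, the first step is to extract from $\Phi$ a scalar Daniell functional $\varphi\colon\L^0(\R_+)\to[0,+\infty]$ via (I) by setting $\varphi(\xi)=k$ whenever $\Phi(\xi c+C)=kc+C$, and $\varphi(\xi)=+\infty$ otherwise. Well-definedness (uniqueness of $k$) rests on $-c\notin C$, which is forced by $c\in\interior C$ and $C\neq\R^m$ (otherwise $0=\frac{1}{2}c+\frac{1}{2}(-c)\in\interior C$ would imply $C=\R^m$). The identities $(\xi c+C)\oplus(\eta c+C)=(\xi+\eta)c+C$ and $\lambda(\xi c+C)=\lambda\xi c+C$ turn (A) and (P) into conlinearity of $\varphi$, and monotonicity is then immediate from additivity. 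For the monotone convergence property, if $\xi_n\downarrow\xi$ in $\L^0(\R_+)$ with $\varphi(\xi_1)<+\infty$, the sets $F_n=\xi_n c+C$ satisfy the hypotheses of (C) with limit $\xi c+C$, and (C) forces $\varphi(\xi_n)\downarrow\varphi(\xi)$. The classical Daniell-Stone theorem then produces a unique measure $\mu$ with $\varphi(\xi)=\int\xi\,d\mu$, and the supplementary clause of (I) delivers $\sigma$-finiteness via the exhaustion $\X=\bigcup_k\cb{\xi>1/k}$ for the asserted $\xi\in\L^0(\R_{++})$ with $\varphi(\xi)<+\infty$, together with nontriviality $\mu(\X)>0$.

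The second step is to extend the identity $\Phi=\int\cdot\,d\mu$ to arbitrary $\mu$-integrable $F\in\F$ by way of (S). For such $F$ and each $w\in C^+\sm\cb{0}$, let $\xi^w(x)=\inf_{y\in F(x)}\ip{y,w}$, and note that $(\xi^w)^+$ is automatically $\mu$-integrable since $\xi^w(x)\leq\ip{f(x),w}$ for any $f\in\L^\mu(F)$. The pointwise identity
\[
F^w\oplus\Bigl(\frac{(\xi^w)^-}{\ip{c,w}}c+C\Bigr)=\Bigl(\frac{(\xi^w)^+}{\ip{c,w}}c+C\Bigr)\oplus H(w)
\]
together with (A), (I), (N), and a cancellation principle---if $D_1\oplus(ac+C)=D_2\oplus(ac+C)$ in $\G$ with $a<+\infty$, equating support functions and applying \eqref{separation} yields $D_1=D_2$---gives $\Phi(F^w)=\cb{z\in\R^m\mid\ip{z,w}\geq\int\xi^w d\mu}$ in the subcase where $(\xi^w)^-$ is also $\mu$-integrable. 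When $\int(\xi^w)^-d\mu=+\infty$, the expected conclusion $\Phi(F^w)=\R^m$ is reached by combining a vertical truncation $\xi^w\vee(-n)$ with a $\sigma$-finite horizontal exhaustion $B_k\uparrow\X$ of finite $\mu$-measure and invoking (C) twice. Applying (S) and the representation \eqref{repr} then yields
\[
\Phi(F)=\bigcap_{w\in C^+\sm\cb{0}}\Phi(F^w)=\bigcap_{w\in C^+\sm\cb{0}}\cb{z\in\R^m\mid\ip{z,w}\geq\int\xi^w d\mu}=\int F\,d\mu,
\]
and uniqueness of $\mu$ is inherited from its classical Daniell-Stone counterpart applied to $\varphi$.

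The hard part will be the extension step when $\xi^w$ fails to be $\mu$-integrable. The natural vertical truncation $\xi^w\vee(-n)$ produces companion offsets of the form $((\xi^w)^-\wedge n)c/\ip{c,w}+C$ whose $\varphi$-value can remain $+\infty$ when the set $\cb{(\xi^w)^->0}$ has infinite $\mu$-measure, which makes additivity (A) inapplicable; the $\sigma$-finiteness of $\mu$---precisely the content of the supplementary clause of (I)---is the resource that permits localization to sets $B_k$ of finite measure before passing to the limit. Keeping all intermediate functionals in the ``nonempty $\Phi$-value'' regime where (A) can be applied is the main bookkeeping burden of the proof.
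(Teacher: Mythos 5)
Your outline agrees with the paper on the first half (necessity via the earlier propositions and examples; extraction of the scalar functional $\varphi$ from Property~(I), verification of the classical Daniell--Stone hypotheses, and $\sigma$-finiteness from the second clause of~(I)), but the second half takes a genuinely different route. The paper proceeds in three steps: it first treats \emph{negative} functions ($C\subseteq F(x)$ pointwise), reducing them via~(S) to halfspace-valued functions $\cb{z\mid\ip{z,w}\geq-\xi}$ with $\xi\geq 0$, which are handled by the decomposition $\of{-\tfrac{\xi}{\ip{c,w}}c+C}\oplus H(w)$ and a cancellation against $\Phi(C)=C$; it then treats $f+C$ for $f\in\L^{\mu}(\R^{m})$ using Lemma~\ref{generating} to write $f=\xi c-g$; and finally it translates a general integrable $F$ by a selection $f\in\L^{\mu}(F)$ to land in the negative case. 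You instead apply~(S) directly to the general $F$ and compute $\Phi(F^{w})$ for each supporting halfspace via the identity $F^{w}\oplus\of{\tfrac{(\xi^{w})^{-}}{\ip{c,w}}c+C}=\of{\tfrac{(\xi^{w})^{+}}{\ip{c,w}}c+C}\oplus H(w)$ followed by cancellation (which is sound: $D\oplus(ac+C)=ac+D$ for $D\in\G$, so it is just translation invariance). This is the same cancellation trick as the paper's Step~(1b), but applied once to signed support functions rather than twice after a reduction to the negative case; your truncation-plus-exhaustion treatment of $\int(\xi^{w})^{-}d\mu=+\infty$ plays the role of the paper's Step~(1c), and both ultimately conclude via~(S) and the representation \eqref{repr}. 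Your route skips the paper's Steps~2 and~3 entirely, at the cost of having to manage both signs of $\xi^{w}$ at once.

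Two caveats. First, Property~(S) is only available for $F$ with $\Phi(F)\neq\emptyset$, and establishing $\Phi(F)\neq\emptyset$ for a general $\mu$-integrable $F$ is not mere bookkeeping: one needs a pointwise minorant of $F$ with known nonempty $\Phi$-value, and the natural one is $\xi c+C\subseteq F$ obtained from a selection $f\in\L^{\mu}(F)$ via exactly the bound of Lemma~\ref{generating} (so your route does not fully dispense with that lemma), combined with the monotonicity of $\Phi$ that follows from~(C) applied to an eventually constant sequence. Second, your claim that monotonicity of $\varphi$ is ``immediate from additivity'' is circular in the infinite-value regime: additivity via~(A) is only available when both $\Phi$-values are nonempty, so to show $\varphi(\xi)\leq\varphi(\zeta)$ when $\varphi(\zeta)<+\infty$ one must first rule out $\Phi(\xi c+C)=\emptyset$, which requires deriving monotonicity from~(C) (as the paper does), not from~(A). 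A smaller point: where $\xi^{w}(x)=-\infty$ the offset $\tfrac{(\xi^{w})^{-}}{\ip{c,w}}c+C$ is not a legitimate function in $\F$, so the a.e.-finite case should either be adjusted on the exceptional set or folded into your truncation argument, since properties~(A), (C), (S) are stated pointwise for every $x$. None of these is fatal, but each needs to be addressed for the argument to close.
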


The proof of Theorem~\ref{daniellstone} is given in Section~\ref{appendix3}.

Theorem~\ref{daniellstone} is a set-valued generalization of the following Daniell-Stone characterization of the Lebesgue integral: A functional on the set of all positive measurable functions that maps into $[0,+\infty]$ is the Lebesgue integral with respect to some measure if and only if it is additive and positively homogeneous, and it satisfies the monotone convergence property; see \cite[Theorem~I.4.21]{cinlar} for this version, and \cite{stone} for the original work. Indeed, these three properties are precisely equivalent to properties~(A), (P), (C), respectively, in the scalar case where $m=1$ and $C=\R_{+}$. Properties~(A) and (P) together are sometimes called ``conlinearity" meaning that the functional is linear except that negative scalars are not considered in multiplication.

Therefore, properties (N), (I) and (S) are the additional properties of the set-valued framework. Properties~(N) and (I) put regularity conditions on how the functional preserves the geometric properties of some special set-valued functions. In particular, Property~(N) assumes that a constant homogeneous halspace is mapped to itself under $\Phi$. Together with the aforementioned properties, it ensures that halfspace-valued functions are mapped to halfspaces under $\Phi$. Property~(I) provides a similar regularity condition when the functional is applied to functions having a special ``point plus cone" structure, which is already verified for Aumann integrals in Example~\ref{computeC}. In particular, Property~(I) describes the behavior of the functions of the form $\xi=1_{A}$ where $A\in\A$. Recalling \eqref{indicatorwithfunction}, note that $\xi c+C=1_{A} c+C=\mathbf{1}_{A}F$, where $F\equiv c+C$. The function $1_{A}c+C$ can be thought as the set-valued indicator function of the measurable set $A$. The second part of Property~(I) characterizes $\sigma$-finiteness (and the nontriviality) of the measure and also exists as an optional additional property in the scalar case; see \cite[Exercise~I.4.32]{cinlar}. Since the construction of the Aumann integral in Proposition~\ref{indicatorintegral} and Property~(S) in its relation to Proposition~\ref{convexAumann} already assume $\sigma$-finiteness of the underlying measure, this condition is needed for the characterization theorem in the set-valued case.

Property~(S) assumes that computing supporting halfspaces of functions on $\X$ commutes with the functional, which is verified for Aumann integrals in Proposition~\ref{convexAumann}. While Proposition~\ref{convexAumann} is stated in terms of the Lebesgue integrals of (scalar) support functions, this is not the case for Property~(S). Instead, we consider the supporting halfspace of a function as another function in $\F$ and state Property~(S) in terms of its Aumann integral.
In the scalar case where $m=1$ and $C=\R_{+}$, properties (N), (I) (the first part), and (S) become trivial and thus can be omitted.

\section{Proofs}
\subsection{Proof of Proposition~\ref{integralup}}\label{appendix1}
First, we show
\begin{equation}\label{uppersetequation}
\cb{\int f d\mu\mid f\in\L^{\mu}(F)}=\cb{\int f d\mu\mid f\in\L^{\mu}(F)}+C.
\end{equation}
The $\subseteq$ part is obvious since $0\in C$. For the $\supseteq$ part, let $f\in \L^{\mu}(F)$ and $z\in C$. Define $g\in\L^{0}(\R^{m})$ by $g(x)=f(x)+(\mu(A))^{-1}1_{A}(x) z$ for $x\in\X$, where $A\in\A$ is some nonempty set with $0<\mu(A)<+\infty$, and $1_{A}(x)=1$ for $x\in\A$ and $1_{A}(x)=0$ for $x\notin A$. Then, $g(x)\in F(x)+C=F(x)$ for every $x\in\X$ since $C$ is a cone with $0\in C$. Besides, $g\in\L^{\mu}(F)$ and $\int g d\mu=\int f d\mu+z$.

Note that taking closures in \eqref{uppersetequation} gives
\begin{equation*}
\int F d\mu=\cb{\int f d\mu\mid f\in\L^{\mu}(F)}\oplus C=\int Fd\mu\oplus C.
\end{equation*}

Next, we show that $\cb{\int fd\mu\mid f\in\L^{\mu}(F)}$ is a convex set. Indeed, let $f,g\in\L^{\mu}(F)$ and $\lambda\in(0,1)$. Clearly, $\lambda f+(1-\lambda)g\in\L^{\mu}(F)$ since the values of $F$ are convex sets. Since $\lambda\int fd\mu+(1-\lambda)\int gd\mu=\int(\lambda f+(1-\lambda)g)d\mu$, convexity follows. Hence, $\int Fd\mu$ is also convex. So
\begin{equation*}
\int Fd\mu=\co\of{\int Fd\mu\oplus C}=\cl\co\of{\int Fd\mu+C},
\end{equation*}
that is, $\int Fd\mu\in\G$.

\subsection{Proof of Proposition~\ref{indicatorintegral}}\label{appendix2}
By the definition of the integral,
\begin{equation*}
\begin{split}
\int_{A}F d\mu&=\cl\cb{\int g d\mu\mid g\in \L^{\mu}(\mathbf{1}_{A}F)}\\
&=\cl\of{\cb{\int_{A}g^{1} d\mu\mid g^{1}\in\L^{\mu}(\mathbf{1}_{A}F)}+\cb{\int_{A^{c}}g^{2} d\mu\mid g^{2}\in\L^{\mu}(\mathbf{1}_{A}F)}}.
\end{split}
\end{equation*}
Here, the $\subseteq$ part of the second equality is obvious and the $\supseteq$ part follows from the observation that if $g^{1},g^{2}\in\L^{\mu}(\mathbf{1}_{A}F)$, then $g=1_{A}g^{1}+1_{A^{c}}g^{2}\in \L^{\mu}(\mathbf{1}_{A}F)$ by triangle inequality and $\int gd\mu=\int_{A} g^{1}d\mu+\int_{A^{c}}g^{2}d\mu$.

We claim that
\begin{equation}\label{claim1}
\cb{\int_{A}g^{1} d\mu\mid g^{1}\in \L^{\mu}(\mathbf{1}_{A}F)}=\cb{\int_{A}f d\mu\mid f\in \L^{\mu}(F)}.
\end{equation}
To show $\subseteq$, let $g^{1}\in\L^{\mu}(\mathbf{1}_{A}F)$. For some arbitrarily fixed $g^{0}\in \L^{\mu}(F)$, we have $f=1_{A}g^{1}+1_{A^{c}}g^{0}\in\L^{\mu}(F)$ by triangle inequality and $\int_{A}g^{1} d\mu=\int_{A}f d\mu$ as well. The $\supseteq$ part is trivial since $f\in\L^{\mu}(F)$ implies $1_{A}f\in\L^{\mu}(\mathbf{1}_{A}F)$ (as $0\in  C$) and we have $\int_{A}1_{A}f d\mu=\int_{A}f d\mu$. Thus, \eqref{claim1} holds.

If $\mu(A^{c})=0$, then the result follows immediately since
\begin{equation*}
\begin{split}
\int_{A}F d\mu&=\cl\of{\cb{\int_{A}f d\mu\mid f\in \L^{\mu}(F)}+\cb{0}}=\cl\cb{\int_{A}f d\mu\mid f\in \L^{\mu}(F)}\\
&=\cb{\int_{A}f d\mu\mid f\in \L^{\mu}(F)}\oplus C,
\end{split}
\end{equation*}
where the first equality uses \eqref{claim1} and the last equality is due to the fact that $\int_{A}Fd\mu\in\G$; see Proposition~\ref{integralup} and Definition~\ref{integraloverset}.

Finally, suppose that $\mu(A^{c})>0$. To finish the proof, we claim that
\begin{equation}\label{claim2}
\cb{\int_{A^{c}}g^{2} d\mu\mid g^{2}\in \L^{\mu}(\mathbf{1}_{A}F)}=C.
\end{equation}
To show $\subseteq$, let $g^{2}\in \L^{\mu}(\mathbf{1}_{A}F)$. For every $x\in A^{c}$, we have $g^{2}(x)\in C$. Since $C$ is a closed convex cone, this is equivalent to the following: For every $x\in A^{c}$ and $w\in C^{+}\sm\{0\}$, it holds $\ip{g^{2}(x), w}\geq 0$.  Thus,
\begin{equation*}
\ip{\int_{A^{c}}g^{2} d\mu, w}=\int_{A^{c}}\ip{g^{2}(x), w}\mu(dx)\geq \int_{A^{c}}0d\mu=0,
\end{equation*}
for every $w\in C^{+}\sm\{0\}$, which shows that $\int_{A^{c}}g^{2} d\mu\in C$. To show $\supseteq$ in \eqref{claim2}, let $z\in C$. Since $\mu$ is $\sigma$-finite and $\mu(A^{c})>0$, there exists $A_{0}\in\A$ such that $0<\mu(A^{c}\cap A_{0})\leq\mu(A_{0})<+\infty$. Note that for every $f\in\L^{\mu}(F)$, we have $g^{2}=1_{A}f+1_{A^{c}\cap A_{0}} z\in\L^{\mu}(\mathbf{1}_{A}F)$ since
\begin{equation*}
\int\abs{g^{2}}d\mu\leq \int_{A}\abs{f}d\mu+\mu(A^{c}\cap A_{0})\abs{z}<+\infty,
\end{equation*}
by triangle inequality, where $\abs{\cdot}$ is some arbitrary fixed norm on $\R^{m}$. Besides, for such $g^{2}$,
\begin{equation*}
\int_{A^{c}}g^{2} d\mu=\mu(A^{c}\cap A_{0}) z.
\end{equation*}
Hence,
\begin{equation*}
\cb{\int_{A^{c}}g^{2} d\mu\mid g^{2}\in\L^{\mu}(\mathbf{1}_{A}F)}\supseteq \cb{\mu(A^{c}\cap A_{0}) z\mid z\in  C}=\mu(A^{c}\cap A_{0})C= C
\end{equation*}
since $C$ is a cone and $\mu(A^{c}\cap A_{0})>0$. Thus, \eqref{claim2} holds.

\subsection{Proof of Theorem~\ref{daniellstone}}\label{appendix3}

Before we prove Theorem~\ref{daniellstone}, we begin with a technical result to be used in the proof.

\begin{lem}\label{generating}
For every $f\in \L^{\mu}(\R^{m})$, there exists some $\xi\in\L^{\mu}(\R_{+})$ such that $\xi c-f\in\L^{\mu}(C)$.
\end{lem}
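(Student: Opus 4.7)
The plan is to exploit the assumption $c \in \interior C$ to produce a uniform scaling: find $\varepsilon > 0$ such that the closed ball of radius $\varepsilon$ around $c$ is contained in $C$, and then choose $\xi$ proportional to a norm of $f$.

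Fix an arbitrary norm $\abs{\cdot}$ on $\R^{m}$. Since $c \in \interior C$, there exists $\varepsilon > 0$ such that $c + v \in C$ for every $v \in \R^{m}$ with $\abs{v} \leq \varepsilon$. Define
\begin{equation*}
\xi(x) = \varepsilon^{-1} \abs{f(x)}, \quad x \in \X.
\end{equation*}
Then $\xi$ is Borel measurable and nonnegative. Since $f \in \L^{\mu}(\R^{m})$, each $f_{i}$ is $\mu$-integrable, and equivalence of norms on $\R^{m}$ implies $\abs{f}$ is $\mu$-integrable; hence $\xi \in \L^{\mu}(\R_{+})$.

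Next I would verify pointwise that $\xi(x) c - f(x) \in C$. If $f(x) = 0$ this is immediate since $0 \in C$. Otherwise $\xi(x) > 0$ and
\begin{equation*}
\xi(x) c - f(x) = \xi(x)\of{c - \xi(x)^{-1} f(x)}.
\end{equation*}
The vector $v \coloneqq -\xi(x)^{-1} f(x)$ satisfies $\abs{v} = \varepsilon$, so $c + v \in C$ by the choice of $\varepsilon$, and since $C$ is a cone with $\xi(x) > 0$ we conclude $\xi(x) c - f(x) \in C$. Finally, integrability of $\xi c - f$ follows from the triangle inequality $\abs{\xi c - f} \leq \abs{c}\, \xi + \abs{f}$ together with $\xi, \abs{f} \in \L^{\mu}(\R_{+})$, so $\xi c - f \in \L^{\mu}(C)$.

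There is essentially no obstacle here: the only substantive ingredient is the standing assumption $\interior C \neq \emptyset$ and the fact that $c$ lies in this interior, which is precisely what allows a single scalar function $\xi$, depending only on the magnitude of $f$, to dominate $f$ in the cone order.
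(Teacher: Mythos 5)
Your proof is correct, but it takes a genuinely different route from the paper's. The paper argues dually: it sets $\xi=\bigl(\sup_{w\in C^{+}\sm\cb{0}}\ip{f,w}/\ip{c,w}\bigr)^{+}$, which is pointwise the \emph{smallest} admissible $\xi$, verifies $\xi c-f\in\L^{0}(C)$ via the support-function characterization of $C$, and then proves integrability by normalizing to the base $D(c)=\cb{w\in C^{+}\colon\ip{c,w}=1}$ and using that $D(c)$ is bounded (which rests on pointedness of $C^{+}$, itself a consequence of $\interior C\neq\emptyset$). You instead argue primally: the ball $\cb{c+v\mid\abs{v}\leq\varepsilon}\subseteq C$ lets you take $\xi=\varepsilon^{-1}\abs{f}$, and the cone property of $C$ does the rest. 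Both proofs ultimately lean on the same hypothesis $c\in\interior C$, but yours is more elementary (no dual cone, no boundedness of $D(c)$) and sidesteps the measurability question raised by the paper's uncountable supremum; the paper's construction buys the optimal, i.e.\ pointwise minimal, choice of $\xi$, which is not needed for the lemma. Your verification of the pointwise inclusion (including the $f(x)=0$ case) and of integrability of both $\xi$ and $\xi c-f$ is complete.
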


\begin{proof}
Let
\begin{equation*}
\xi\coloneqq \of{\sup_{w\in C^{+}\sm\cb{0}}\frac{\ip{f, w}}{\ip{c, w}}}^{+}.
\end{equation*}
Recall that $\ip{c, w}>0$ for every $w\in C^{+}\sm\cb{0}$. Hence, $\ip{\xi c -f, w}=\xi \ip{c, w}-\ip{f, w}\geq 0$ for every $w\in C^{+}\sm\cb{0}$, that is, $\xi c-f\in \L^{0}(C)$. It remains to show that $\xi\in\L^{\mu}(\R_{+})$. Noting that $\frac{\ip{f, w}}{\ip{c, w}}=\frac{\ip{f, kw}}{\ip{c, kw}}$ for every $k>0$ and $w\in C^{+}\sm\cb{0}$, it follows that
\begin{equation*}
\xi= \of{\sup_{w\in D(c)}\ip{f, w}}^{+},
\end{equation*}
where $D(c)\coloneqq \cb{w\in C^{+}\colon \ip{c, w}=1}$. Since $\interior C\neq \emptyset$, the dual cone $C^{+}$ is pointed, that is, $C^{+}\cap - C^{+}=\cb{0}$. Hence, an elementary exercise in convex analysis yields that $D(c)$ is a bounded set. Therefore, letting
\begin{equation*}
a\coloneqq \sup_{w\in D(c)}\max_{i\in\cb{1,\ldots,m}}\abs{w_{i}}<+\infty,
\end{equation*}
it holds
\begin{equation*}
\xi \leq \sup_{w\in D(c)}\sum_{i=1}^{m}\abs{f_{i}}\abs{w_{i}}\leq a\sum_{i=1}^{m}\abs{f_{i}}.
\end{equation*}
Since $f\in\L^{\mu}(\R^{m})$, it follows that $\xi\in\L^{\mu}(\R_{+})$.
\end{proof}

\begin{proof}[Proof of Theorem~\ref{daniellstone}.]
If there exists a unique nonzero $\sigma$-finite measure $\mu$ on $(\X,\A)$ such that $\Phi(F)=\int Fd\mu$ for every $\mu$-integrable $F\in\F$, then all properties are already satisfied as shown in  Proposition~\ref{selectionsum} (Property~(A)), Proposition~\ref{poshom-int} (Property~(P)), Proposition~\ref{contabove} (Property~(C)), Example~\ref{computeG} (Property~(N)), Example~\ref{computeC} (Property~(I)), Proposition~\ref{convexAumann} (Property~(S)). 

Suppose that $\Phi$ satisfies the properties listed in the theorem.

Let $\xi\in\L^{0}(\R_{+})$. If $\Phi(\xi c+C)=\emptyset$, then set $\varphi(\xi)=+\infty$. Otherwise, there exists $k\in\R_{+}$ with $\Phi(\xi c+C)=kc+C$ and set $\varphi(\xi)=k$. Here, such $k$ is unique because for $\bar{k}\in\R$,
\begin{equation}\label{uniquenessargument}
 kc+ C=\bar{k}c+C\quad\Longrightarrow\quad k=\bar{k}.
\end{equation}
This is an easy exercise which follows from the fact that $c\in\interior C$. Hence $\varphi(\xi)$ is well-defined. Next, we show that the function $\varphi\colon\L^{0}(\R_{+})\rightarrow\R_{+}\cup\{+\infty\}$ satisfies the assumptions of the classical Daniell-Stone theorem: 

\begin{enumerate}[(i)]
\item Monotonicity: Let $\xi, \zeta\in\L^{0}(\R_{+})$ with $\xi(x)\leq\zeta(x)$ for every $x\in\X$. We claim $\varphi(\xi)\leq\varphi(\zeta)$. If $\varphi(\zeta)=+\infty$, the claim holds trivially. Assume that $\varphi(\zeta)<+\infty$, that is, $\Phi(\zeta c+C)\neq\emptyset$. Then, $F^{1}\coloneqq \zeta c+C$, $F\coloneqq F^{n}\coloneqq \xi c + C$ for $n\in\cb{2,3,\ldots}$ satisfy the assumptions of Property~(C). Hence, $\emptyset\neq\Phi(\zeta c + C)\subseteq \Phi(\xi c + C)$. By Property~(I), $\emptyset\neq\varphi(\zeta)c+C\subseteq\varphi(\xi)c+C$. By \eqref{uniquenessargument}, it follows that $\varphi(\xi)\leq \varphi(\zeta)$.

\item Additivity: Let $\xi, \zeta\in\L^{0}(\R_{+})$. We claim $\varphi(\xi+\zeta)=\varphi(\xi)+\varphi(\zeta)$. By the monotonicity of $\varphi$, it holds $\max\cb{\varphi(\xi),\varphi(\zeta)}\leq\varphi(\xi+\zeta)$. Hence, if $\varphi(\xi)=+\infty$ or $\varphi(\zeta)=+\infty$, then the claim holds trivially. Assume that $\varphi(\xi)<+\infty$ and $\varphi(\zeta)<+\infty$, that is, $\Phi(\xi c+C)\neq\emptyset$ and $\Phi(\zeta c+C)\neq\emptyset$. Hence, by Property~(A),
\begin{equation*}
\Phi((\xi+\zeta)c+C)=\Phi\of{(\xi c +C)\oplus (\zeta c+C)}=\Phi(\xi c + C)\oplus\Phi(\zeta c + C) = (\varphi(\xi)+\varphi(\zeta))c+C\neq\emptyset.
\end{equation*}
By Property~(I) and \eqref{uniquenessargument}, it follows that $\varphi(\xi+\zeta)=\varphi(\xi)+\varphi(\zeta)$.

\item Positive homogeneity: Let $\xi\in\L^{0}(\R_{+})$ and $\lambda\in\R_{+}$. We claim $\varphi(\lambda\xi)=\lambda\varphi(\xi)$. If $\lambda=0$, then $\varphi(0)c+C=\Phi(C)=\Phi(0 C)=0 \Phi(C)= C$ by Property~(P). Hence, the claim holds with $\varphi(\lambda\xi)=\varphi(0)=0$. Assume that $\lambda>0$. By Property~(P) and the fact that $C$ is a cone, it holds $\Phi(\lambda\xi c+C)=\Phi(\lambda(\xi c+C))=\lambda\Phi(\xi c+C)$. If $\varphi(\xi)=+\infty$, that is, $\Phi(\xi c + C)=\emptyset$, then $\Phi(\lambda\xi c+C)=\emptyset$, that is, $\varphi(\lambda\xi)=+\infty$. If $\varphi(\xi)<+\infty$, then $\varphi(\lambda\xi)c+C=\lambda(\varphi(\xi)c+C)=\lambda\varphi(\xi)c+C$. By \eqref{uniquenessargument}, it follows that $\varphi(\lambda\xi)=\lambda\varphi(\xi)$.

\item Continuity from above: Let $(\xi^{n})_{n\in\mathbb{N}}, \xi^{\infty}$ be in $\L^{0}(\R_{+})$ such that $\varphi(\xi^{1})<+\infty$, $\xi^{n}(x)\geq\xi^{n+1}(x)$ for every $n\in\mathbb{N}$ and $x\in\X$, and $\lim_{n\rightarrow\infty}\xi^{n}(x)=\xi^{\infty}(x)$ for every $x\in\X$. We claim $\varphi(\xi^{n})\geq \varphi(\xi^{n+1})$ for every $n\in\mathbb{N}$ and $\lim_{n\rightarrow\infty}\varphi(\xi^{n})=\varphi(\xi^{\infty})$. The first part of the claim follows by the monotonicity of $\varphi$. For the second part, let $F_{n}\coloneqq \xi^{n}c+C$ for every $n\in\mathbb{N}\cup\cb{+\infty}$. Then, $\Phi(F_{1})\neq\emptyset$ and hence,
\begin{equation*}
\Phi(F_{\infty})=\cl \bigcup_{n\in\mathbb{N}}\Phi(F_{n})=\cl\bigcup_{n\in\mathbb{N}}(\varphi(\xi^{n})c+C)=\of{\lim_{n\rightarrow\infty}\varphi(\xi^{n})}c+C\neq\emptyset
\end{equation*}
by Property~(C). By \eqref{uniquenessargument}, it follows that $\varphi(\xi^{\infty})=\lim_{n\rightarrow\infty}\varphi(\xi^{n})$.

\end{enumerate}
Hence, by the classical Daniell-Stone theorem, there exists a unique measure $\mu$ such that
\begin{equation}\label{scalardaniellstone}
\varphi(\xi)=\int\xi d\mu
\end{equation}
for every $\xi\in\L^{0}(\R_{+})$. The second part of Property~(I) implies that $\mu$ is nonzero and $\sigma$-finite; see \cite[Exercise~I.4.32]{cinlar}.

Finally, we prove that $\Phi(F)=\int Fd\mu$ for every $\mu$-integrable function $F\in\F$. We proceed in three main steps.

\noindent \textbf{Step 1: } Let $F\in\F$ be a \textit{negative set-valued function} in the sense that $0\in F(x)$, that is, $C\subseteq F(x)$, for every $x\in \X$. Clearly, $F$ is $\mu$-integrable as $0\in\L^{\mu}(F)$. We prove $\Phi(F)=\int Fd\mu$. We first prove this claim for the case of halfspace-valued functions and then approximate $F$ by its supporting halfspaces.
\begin{enumerate}
\item[\textbf{(1a)}] If $F\equiv H(w)$ for some $w\in C^{+}\setminus\{0\}$, then $\Phi(F)=H(w)=\int Fd\mu$ by Property~(N) and Example~\ref{computeG}.

\item[\textbf{(1b)}] Suppose that $F=\{z\in\R^{m}\mid \ip{z, w}\geq -\xi\}$ for some Borel function $\xi\in\L^{0}(\R_{+})$ with $\int \xi d\mu<+\infty$ and some $w\in C^{+}\sm\cb{0}$. Then,
\begin{equation}\label{halfspaceequation}
F=\cb{z\in\R^{m}\mid \ip{z, w}\geq -\frac{\xi}{\ip{c, w}}\ip{c, w}}=\of{-\frac{\xi}{\ip{c, w}}c+C}\oplus H(w).
\end{equation}
Since $\int\xi d\mu<+\infty$, the first part of Property~(A) implies that
\begin{equation*}
\begin{split}
C&=\Phi(C)=\Phi\of{-\frac{\xi}{\ip{c, w}}c+C}\oplus\Phi\of{\frac{\xi}{\ip{c, w}}c+C}\\
&=\Phi\of{-\frac{\xi}{\ip{c, w}}c+C}\oplus \of{\varphi\of{ \frac{\xi}{\ip{c, w}}}c+C} =\varphi\of{ \frac{\xi}{\ip{c, w}}}c+\Phi\of{-\frac{\xi}{\ip{c, w}}c+C}
\end{split}
\end{equation*}
so that
\begin{equation*}
\Phi\of{-\frac{\xi}{\ip{c, w}}c+C}=-\varphi\of{\frac{\xi}{\ip{c, w}}}c+C=\of{-\int \frac{\xi}{\ip{c, w}}d\mu}c+C.
\end{equation*}
Hence, by \eqref{halfspaceequation}, the first part of Property~(A), and Example~\ref{computeG}, it follows that
\begin{equation*}
\begin{split}
\Phi(F)&=\of{\of{-\int\frac{\xi}{\ip{c, w}}d\mu}c+C}\oplus H(w)=\of{-\int\frac{\xi}{\ip{c, w}}d\mu}c+H(w)\\
&=\cb{z\in\R^{m}\mid \ip{z, w}\geq -\int \xi d\mu}=\int Fd\mu.
\end{split}
\end{equation*}

\item[\textbf{(1c)}] Suppose that $F=\{z\in\R^{m}\mid \ip{z, w}\geq -\xi\}$ for some Borel function $\xi\colon\X\rightarrow\R_{+}\cup\{+\infty\}$ and $w\in C^{+}\sm\cb{0}$. 
Then, there exists a sequence $(\xi^{n})_{n\in\mathbb{N}}$ in $\L^{0}(\R_{+})$ with $\xi^{n}\leq \xi^{n+1}$ and $\int \xi^{n}d\mu<+\infty$ for every $n\in\mathbb{N}$, and $\lim_{n\rightarrow\infty}\xi^{n}(x)=\xi(x)$ for every $x\in\X$. Define $F_{n}\coloneqq \{z\in\R^{m}\mid \ip{z, w}\geq -\xi^{n}\}$ for $n\in\mathbb{N}$. By the previous case, Property~(C), monotone convergence theorem for Lebesgue integrals, and Example~\ref{computeG}, it holds
\begin{equation*}
\begin{split}
\Phi(F)&=\cl\bigcup_{n\in\mathbb{N}}\Phi(F_{n})=\cl\bigcup_{n\in\mathbb{N}} \cb{z\in\R^{m}\mid \ip{z, w}\geq -\int \xi^{n} d\mu}\\
&= \cb{z\in\R^{m}\mid \ip{z, w}\geq -\lim_{n\rightarrow\infty}\int \xi^{n} d\mu}=\cb{z\in\R^{m}\mid \ip{z, w}\geq -\int \xi d\mu}=\int Fd\mu.
\end{split}
\end{equation*}

\item[\textbf{(1d)}] Let $F\in\F$ be an arbitrary negative set-valued function. For $w\in C^{+}\sm\{0\}$, define the supporting halfspace $F^{w}$ by \eqref{supportinghalfspace}. Using Property~(S), Step~(1c), Proposition~\ref{convexAumann} for the respective equalities, we have
\begin{equation*}
\Phi(F)=\bigcap_{w\in C^{+}\setminus\{0\}}\Phi(F^{w})=\bigcap_{w\in C^{+}\setminus\{0\}}\int F^{w}d\mu=\int Fd\mu.
\end{equation*}
\end{enumerate}

\noindent \textbf{Step 2:} Let $F=f+C$ for some $f\in\L^{\mu}(\R^{m})$. We show that $\Phi (F)= \int Fd\mu = \int fd\mu + C$.
\begin{enumerate}
\item[\textbf{(2a)}] If $f\in -\L^{\mu}(C)$, then Step~1 and Example~\ref{computeC} imply that $\Phi(f+C)=\int \of{f+C}d\mu = \int fd\mu +C$.
\item[\textbf{(2b)}] If $f=\xi c$ for some $\xi\in \L^{\mu}(\R_{+})$, then $\varphi(\xi)=\int \xi d\mu <+\infty$ by \eqref{scalardaniellstone}. Hence, Property~(I) implies that $\Phi(\xi c+C)=\of{\int \xi d\mu}  c+ C$. By Example~\ref{computeC}, $\Phi(f+C)=\int\of{f+C}d\mu$.

\item[\textbf{(2c)}] If $f\in \L^{\mu}(\R^{m})$, we may write $f=\xi c-g$ for some $\xi\in\L^{\mu}(\R_{+})$ and $g\in \L^{\mu}(C)$ by Lemma~\ref{generating}. The previous two cases and the first part of Property~(A) yield that
\begin{equation*}
\Phi(f+C)=\Phi\of{(\xi c+C)\oplus(-g+C)}=\of{\int (\xi c) d\mu + C}\oplus \of{\int (-g)d\mu + C}=\int fd\mu + C.
\end{equation*}
By Example~\ref{computeC}, $\Phi(f+C)=\int\of{f+C}d\mu$.
\end{enumerate}

\noindent \textbf{Step 3: } Let $F\in\F$ be $\mu$-integrable. Fix $f\in\L^{\mu}(F)$. Then $G\coloneqq -f+F$ is a negative set-valued function as defined in Step~1. By the previous two steps and the first part of Property~(A),
\begin{equation*}
\begin{split}
-\int fd\mu + \Phi(F)&=\of{-\int fd\mu +C}\oplus\Phi(F)=\Phi(-f+C)\oplus\Phi(F)=\Phi(G)\\
&=\int Gd\mu = \int (-f+C)d\mu\oplus\int Fd\mu=-\int fd\mu+\int Fd\mu.
\end{split}
\end{equation*}
It follows that $\Phi(F)=\int Fd\mu$.

The uniqueness of $\mu$ follows from its definition and \eqref{uniquenessargument}.
\end{proof}

\begin{rem}
In the proof of Theorem~\ref{daniellstone}, the measure $\mu$ can alternatively be constructed without reference to the classical Daniell-Stone theorem. Indeed, for each $A\in\A$, one can take $\xi=1_{A}$ in Property~(I) and define $\mu(A)\coloneqq\varphi(\xi)$. Hence, $\Phi(1_{A}c+C)=\mu(A)c+C$ if $\mu(A)<+\infty$ and $\Phi(1_{A}c+C)=\emptyset$ otherwise. Then, analogous to the properties (i)-(iv) of $\varphi$ in the proof of Theorem~\ref{daniellstone}, the following properties of the set function $\mu$ can be checked:
\begin{enumerate}
\item $\mu$ is monotone, that is, $A\subseteq B$ implies $\mu(A)\leq\mu(B)$ for every $A, B\in\A$.
\item $\mu$ is additive, that is, $\mu(A+B)=\mu(A)+\mu(B)$ for every disjoint $A, B\in\A$.
\item $\mu(\emptyset)=0$.
\item $\mu$ is continuous from above, that is, if $(A_{n})_{n\in\mathbb{N}}$ is a sequence in $\A$ with $\mu(A_{1})<+\infty$ and $A_{n}\supseteq A_{n+1}$ for every $n\in\mathbb{N}$, then $\mu(\bigcap_{n\in\mathbb{N}}A_{n})=\lim_{n\rightarrow\infty}\mu(A_{n})$.
\end{enumerate}
Hence, $\mu$ is a measure.
\end{rem}

\section*{Acknowledgment}
The second author acknowledges that her research was supported by the National Science Foundation under award DMS-1007938.

\end{document}